\newtheorem{Theorem}{Theorem}[section]
\newtheorem{Lemma}{Lemma}[section]
\newtheorem{Corollary}{Corollary}[section]
\newtheorem{Remark}{Remark}[section]
\theoremstyle{remark}
\newcommand{\be}{\begin{equation}}
\newcommand{\ee}{\end{equation}}
\newcommand{\Id}{\textrm{\rm Id}}
\newcommand{\gl}{\mathrm{gl}}
\newcommand{\pd}[2]{\frac{\partial#1}{\partial#2}}
\newcommand{\dd}{{\mathrm d}\,}
\providecommand{\keywords}[1]
{
  \small	
  \textbf{\text{Keywords:}} #1
}
\newcommand{\footremember}[2]{%
    \footnote{#2}
    \newcounter{#1}
    \setcounter{#1}{\value{footnote}}%
}
\newcommand{\weg}[1]{}
\title{Nijenhuis operators with a unity and $F$-manifolds}
\author{%
  Evgenii I. Antonov\footremember{alley}{Institut f\"ur Mathematik, Friedrich Schiller Universit\"at Jena, 07737 Jena Germany, \\
  \href{evgenii.antonov@uni-jena.de}{\textbf{evgenii.antonov@uni-jena.de}}}%
   \and \& \and Andrey Yu. Konyaev\footremember{trailer}{Faculty of Mechanics and Mathematics, Moscow State University, and Moscow Center for Fundamental and
Applied Mathematics, 119992, Moscow Russia, 
\href{maodzund@yandex.ru}{\textbf{maodzund@yandex.ru}}}%
}
\date{}
\begin{document}

\maketitle

\begin{abstract}
    The core object of this paper is a pair $(L, e)$, where $L$ is a Nijenhuis operator and $e$ is a vector field satisfying a specific Lie derivative condition, i.e., $\mathcal{L}_{e}L=\operatorname{Id}$.
    
    Our research unfolds in two parts. In the first part, we establish a Splitting Theorem for Nijenhuis operators with a unity, offering an effective reduction of their study to cases where $L$ has either one real or two complex conjugate eigenvalues at a given point. We further provide the normal forms for $\gl$-regular Nijenhuis operators with a unity around algebraically generic points, along with semi-normal forms for dimensions two and three. 

    In the second part, we establish the relationship between Nijenhuis operators with a unity and $F$-manifolds. Specifically, we prove that the class of regular $F$-manifolds coincides with the class of Nijenhuis manifolds with a cyclic unity. By extending our results from dimension three, we reveal semi-normal forms for corresponding $F$-manifolds around singularities. 

\end{abstract}

\keywords{Nijenhuis operator, $F$-manifold, singularity, semi-normal form}

\textbf{MSC:} \space 32B05, 32B10, 32G99, 37K25, 37K30, 37K50, 53A45, 53A55, 53B25, 53B99, 53D45

\section{Introduction}

Nijenhuis operators first appeared in work by A.Nijenhuis \cite{nij} in 1951 in the study of the integrability problem for eigendistributions of operator fields. Later the results of the paper led to the development of several differential concomitants, Fr\"olicher-Nijenhuis bracket in particular (see \cite{ks} for a historical overview on the subject). In following years Nijenhuis torsion and Nijenhuis operators appeared in different branches of mathematics and mathematical physics. Perhaps, the most famous example is the Newlander-Nirenberg theorem.

In late 2010-s A. Bolsinov, A. Konyaev, and V. Matveev initiated the program to study Nijenhuis operators systematically \cite{bkm1}. The idea was to look at Nijenhuis operators, which are usually treated as secondary objects, as primary ones. This approach proved to be quite fruitful \cite{bkm2, bkm3, bkm4, bkm5, bkm6, k}. Typically, Nijenhuis operators come with a "companion" object --- a Poisson bracket \cite{bkm3}, a metric \cite{bkm4, bkm5}, a vector field \cite{bkm6} --- with some condition of compatibility.

The pair $\left(L, e\right)$ on a manifold $\mathsf M^n$, where $L$ is an operator field and $e$ is a vector field, is called  {\it a Nijenhuis operator with a unity} if $L$ is a Nijenhuis and $e$ satisfies the identity
\begin{equation}\label{unitycond}
    \mathcal{L}_{e}L = \Id.
\end{equation}
Here $\mathcal L_e$ stands for Lie derivative along a vector field $e$. Taking $\operatorname{trace}$ of the both sides of equality \eqref{unitycond}, we obtain:
\begin{equation}\label{tracecond}
e(\operatorname{tr}L) = n.  
\end{equation}
It implies that $e \neq 0$ everywhere on $\mathsf M^n$. The main object of our study is the pair $(L,e)$. 

If the category (analytic or smooth) is not stated, the results hold in both. In this work, we show:
\begin{itemize}
    \item We prove the Splitting Theorem for Nijenhuis operators with a unity, effectively reducing the study of such structures to the case when $L$ has either one real or two complex conjugate eigenvalues at a given point. We stress that splitting works in both regular and singular points.
    \item We provide the local normal form of $\gl$-regular Nijenhuis operators with a unity when $L$ has either one real or two conjugate complex eigenvalues in a given algebraically generic point.
    \item We provide local semi-normal forms for all Nijenhuis operators with a unity in dimension two.
    \item We provide a local semi-normal form for Nijenhuis operators with a unity in dimension three in a neighborhood of the following singular point $\mathsf p_0$: $L$ has two distinct eigenvalues at a point of general position and the linear operator $L(\mathsf p_0)$ has a single eigenvalue.
\end{itemize}
In the second part of our study, we explore the relationship between Nijenhuis operators with a unity and $F$-manifolds. We prove that the class of regular $F$-manifolds coincides with the class of Nijenhuis manifolds with a cyclic unity.

Furthermore, we apply the results in dimension three to provide semi-normal forms of corresponding $F$-manifolds around singularities. This result is new for the theory of $F$-manifolds. In particular, we construct a lot of new examples in this special case.


\section{General theory of Nijenhuis operators with a unity}

Let $\mathsf M^n$ be a smooth or analytic manifold of dimension $n$. For any operator field $L$ {\it the Nijenhuis torsion of $L$} is defined as
$$
 \mathcal N_L(\xi, \eta) = [L\xi, L\eta] + L^2[\xi, \eta] - L[L\xi, \eta] - L[\xi, L\eta].
$$
Here $\xi, \eta$ are arbitrary vector fields and the square brackets stand for the standard commutator of vector fields. The r.h.s. of this equation defines a tensor field of type $(1, 2)$, skew-symmetric in lower indices. We say that $L$ is {\it Nijenhuis operator} if its Nijenhuis torsion vanishes.

Since a Nijenhuis torsion $\mathcal{N}_{L}$ is a tensor of type $(1,2)$ we can also interpret it as a linear map from $T\mathsf M^n$ to $\operatorname{End}(T \mathsf M^n)$:
\begin{equation}
\label{nijendef2}
\mathcal{N}_{L}: \xi \mapsto L \mathcal{L}_{\xi} L-\mathcal{L}_{L \xi} L.
\end{equation}
See \cite{bkm1} for details and other definitions of Nijenhuis operators.

A point $ \mathsf p \in \mathsf M^n$ is called {\it algebraically generic for $L$} if the Segre characteristic of $L$ does not change in some neighborhood $U(\mathsf p) \subset \mathsf M^n$, otherwise it is called {\it singular}.

The following examples show the basic properties of Nijenhuis operators with a unity.
\begin{itemize}
    \item Let $L$ be a Nijenhuis operator conjugated to a standard Jordan block in a neighborhood of a point $\mathsf p_0 \in \mathsf M^n$ and its eigenvalue $\lambda_0$ is constant. Then it can be brought to the following form:
     \begin{equation*}
        L =  \left(\begin{array}{cccccc}
 \lambda_0 & & & & & \\
 1 &  \lambda_0 & & & & \\
 & 1 & \ddots & & & \\
 &  & \ddots  & \ddots & & \\
 & &  & 1 &  \lambda_0 &\\
 &  &   &  &  1 &  \lambda_0
\end{array}\right).
\end{equation*}
 The equality \eqref{tracecond} is an obstruction to the existence of a unity vector field $e$ for $L$.   
 \item 
Let $L$ be a differentially non-degenerate Nijenhuis operator. Then in canonical coordinates $\left(u^1, \dots, u^n\right)$
$$
L = \left( \begin{array}{ccccc}
     u^1 & 1 & 0 & \dots & 0  \\
     u^2 & 0 & 1 & \dots & 0  \\
      & & & \ddots &  \\
     u^{n - 1} & 0 & 0 & \dots & 1  \\
     u^n & 0 & 0 & \dots & 0  \\
\end{array}\right).
$$
The coordinate origin is a singular point: at this point, $L$ is a Jordan block of maximal size with zero eigenvalues, while in every neighborhood of coordinate origin, there exist points at which the eigenvalues of $L$ are real and pairwise distinct. By direct computation the vector field
$$
e = \left(n, - (n - 1)u^1, - (n - 2) u^2, \dots, - 2 u^{n - 2}, - u^{n - 1}\right)^T
$$
is the unity vector field for $L$.

\end{itemize}

The next Theorem is the first result of this paper.
\begin{Theorem}\label{t1}
Let $L$ be a  Nijenhuis operator with a unity $e$. Assume that at a point $\mathsf p$, its characteristic polynomial  $\chi_L(\lambda)=\det(\lambda\,\Id - L(\mathsf p))$ is factorised as $\chi_L(\lambda) = \chi_1(\lambda)\chi_2(\lambda)$, where $\chi_1(\lambda)$ and $\chi_2(\lambda)$ are coprime monic polynomials. Then in a~neighbourhood of $\mathsf p$ there exists a coordinate system 
$$
\underbrace{u^1_1, \dots, u^{m_1}_1}_{u_1}, \underbrace{u^1_2, \dots, u^{m_2}_2}_{u_2},  \quad \text{where} \quad m_1 = \operatorname{deg} \chi_1(\lambda),  m_2 = \operatorname{deg} \chi_2(\lambda)
$$
such that
\begin{enumerate}
    \item The Nijenhuis operator $L$ has the form
    $$
    L(u_1, u_2) = \begin{pmatrix}
         L_1(u_1) & 0  \\
         0 & L_2(u_2) 
     \end{pmatrix},
    $$
    where each of $L_i$ is Nijenhuis,  $i = 1, 2$. Moreover, $\chi_{L_1}(\lambda) = \chi_1(\lambda)$ and $\chi_{L_2}(\lambda) = \chi_2(\lambda)$.
    \item The unity vector field $e$ is decomposed into the sum $e = e_1 + e_2$, where 
    $$
    e_i = e_\alpha^i \pd{}{u^\alpha_i}, \quad \alpha = 1, \dots, m_i,
    $$
    and $e^\alpha_i$ depends only on variables $u_i$.
    \item Each summand $e_i$ is a unity vector field for respective $L_i$.
\end{enumerate}
\end{Theorem}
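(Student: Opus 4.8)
The plan is to obtain the splitting of $L$ itself from the general Splitting Theorem for Nijenhuis operators developed in \cite{bkm1}, and then to handle the unity field $e$ by exploiting the algebraic structure of the equation $\mathcal L_e L = \Id$ in the resulting block coordinates. Since $\chi_1$ and $\chi_2$ are coprime at $\mathsf p$, the spectra of the two factors are disjoint at $\mathsf p$ and hence, by continuity, throughout a whole neighbourhood. The Nijenhuis Splitting Theorem then supplies coordinates $(u_1, u_2)$ in which $L = \operatorname{diag}(L_1(u_1), L_2(u_2))$ with each $L_i$ Nijenhuis and $\chi_{L_i} = \chi_i$; this is exactly assertion (1), so the genuinely new work concerns $e$.

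Next I would write $e = e_1 + e_2$, where $e_1 = e_1^a \, \partial/\partial u_1^a$ collects the components along the first block and $e_2 = e_2^\alpha \, \partial/\partial u_2^\alpha$ those along the second, a priori both depending on all of $(u_1, u_2)$ (I use Latin indices $a,b$ for the first block and Greek indices $\alpha,\beta,\gamma$ for the second). The key is to insert this decomposition into $\mathcal L_e L = \Id$ and read off the four blocks using the standard formula $(\mathcal L_e L)^i_j = e^k \partial_k L^i_j - L^k_j \partial_k e^i + L^i_k \partial_j e^k$, together with the facts that $L$ is block-diagonal and that $L_1$ depends only on $u_1$ while $L_2$ depends only on $u_2$. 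The off-diagonal $(1,2)$-block of the identity $\mathcal L_e L = \Id$ (whose own $(1,2)$-block vanishes) then collapses to
\[
L_1 M - M L_2 = 0, \qquad M^a_\beta := \frac{\partial e_1^a}{\partial u_2^\beta},
\]
a Sylvester equation for the $u_2$-Jacobian $M$ of $e_1$. Because the spectra of $L_1$ and $L_2$ are disjoint on the neighbourhood, the Sylvester operator $X \mapsto L_1 X - X L_2$ is invertible at every point, forcing $M \equiv 0$; thus $e_1$ depends only on $u_1$. The symmetric computation on the $(2,1)$-block shows that $e_2$ depends only on $u_2$, which is assertion (2).

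Finally, with $e_1 = e_1(u_1)$ and $e_2 = e_2(u_2)$ established, the diagonal $(1,1)$-block of $\mathcal L_e L = \Id$ involves only $e_1$ and its $u_1$-derivatives and reduces precisely to $\mathcal L_{e_1} L_1 = \Id$; likewise the $(2,2)$-block gives $\mathcal L_{e_2} L_2 = \Id$, yielding assertion (3). I expect the main obstacle to be organizational rather than conceptual: one must carry out the index bookkeeping in the Lie-derivative formula carefully enough to see that each block genuinely decouples — in particular that the $(1,1)$-block receives no contribution from $e_2$ and depends on $e_1$ only through its $u_1$-derivatives — after which the coprimality/Sylvester step does the decisive work. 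A minor point to verify is that the neighbourhood furnished by the Splitting Theorem may be shrunk so that the spectra of $L_1$ and $L_2$ stay separated throughout, guaranteeing invertibility of the Sylvester operator everywhere on it.
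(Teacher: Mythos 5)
Your proposal is correct, but for assertions (2) and (3) it takes a genuinely different route from the paper's. The paper works in the opposite order: it first straightens the unity field, choosing coordinates with $e = \pd{}{x^1}$, so that $\mathcal{L}_e L = \Id$ forces $L = A(x^2,\dots,x^n) + x^1\,\Id$; it then observes that $\chi_i(L) = p_i(A)$ is independent of $x^1$, whence for $\eta_i \in \mathcal{D}_i := \operatorname{Ker}\chi_i(L)$ one gets $\chi_i(L)\left[e,\eta_i\right] = \mathcal{L}_e\bigl(\chi_i(L)\eta_i\bigr) - \mathcal{L}_e\bigl(\chi_i(L)\bigr)\eta_i = 0$, i.e.\ $e$ preserves both distributions; passing to coordinates adapted to $\mathcal{D}_1 \oplus \mathcal{D}_2$ then yields (2), and (3) follows by restricting $\mathcal{L}_e L = \Id$ to the leaves. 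You instead take the block coordinates of the Splitting Theorem as the starting point and read the claim off the off-diagonal blocks of $\mathcal{L}_e L = \Id$, which indeed collapse to the pointwise Sylvester equation
\begin{equation*}
L_1 M - M L_2 = 0, \qquad M^a_\beta = \frac{\partial e_1^a}{\partial u_2^\beta},
\end{equation*}
so that disjointness of the spectra kills $M$; your index bookkeeping is sound (the diagonal blocks really do involve only $e_i$ and its $u_i$-derivatives, so they decouple into $\mathcal{L}_{e_i}L_i = \Id$ once (2) is in hand), and the spectral separation you flag as a point to verify is automatic, since the coprime factorization $\chi_L = \chi_1\chi_2$ extends by continuity to the whole neighbourhood (the resultant of the factors is nonzero at $\mathsf p$, hence nearby), which is exactly how the paper's splitting is set up. As for what each approach buys: the paper's invariant-distribution argument exploits the unity condition before any splitting and is essentially coordinate-free once $e$ is straightened, while your Sylvester argument is pointwise linear algebra in the adapted chart, arguably more elementary, and slightly more general --- it shows that any vector field whose Lie derivative of a block-diagonal operator with pointwise disjoint block spectra is again block-diagonal must split into block-tangent parts $e_i$ depending only on $u_i$, with $\mathcal{L}_e L = \Id$ then splitting on the nose.
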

The first statement of Theorem \ref{t1} is the splitting theorem for Nijenhuis operators, see \cite[Theorem 3.1]{bkm1}, \cite[Theorems 1 and 2]{splitting}. Theorem \ref{t1} reduces the description of a Nijenhuis operator with a unity in a neighborhood of a given point to a description of a Nijenhuis operator with a unity with the additional condition: operator $L$ at $\mathsf p$ has only one real or two complex conjugate eigenvalues. We stress that in the statement of Theorem \ref{t1} the Segre characteristic of $L_i$ might vary from point to point, namely $\mathsf p$ is not algebraically generic.

There is a natural class of Nijenhuis operators called $\gl$-regular. A linear operator $L$ is {\it $\gl$-regular} if one of the following equivalent conditions hold:
\begin{itemize}
\item For each eigenvalue of $L$ there is exactly one Jordan block in its Jordan normal form (this includes complex eigenvalues).
\item There exists a vector $\xi$, such that $\xi, L\xi, \dots, L^{n - 1}\xi$ are linearly independent. Such vector is called {\it cyclic}.
\item The operators $\Id, L, \ldots, L^{n-1}$ are linearly independent.
\end{itemize}
We say that a Nijenhuis operator $L$ is $\gl$-regular if it is $\gl$-regular at every point. In particular, this means that the Segre characteristic of $L$ might vary, but in a very restricted way: an operator always stays $\gl$-regular.

\begin{Theorem}\label{t2}
Let $L$ be a $\gl$-regular Nijenhuis operator with a unity $e$ on a manifold $\mathsf M^n$. Assume that $\mathsf p \in \mathsf M^n$ is an algebraically generic point and has either one real or two complex conjugate eigenvalues. Then
\begin{itemize}
    \item If $L(\mathsf p)$ has a single real eigenvalue $\lambda_0$, then in a neighbourhood of $\mathsf p$ there exists a unique local coordinate system $\left(u^1, \dots, u^{n}\right)$ such that 
    \begin{equation}
        L =  \left(\begin{array}{cccccc}
u^1 + \lambda_0 & & & & & \\
 1 & u^1 + \lambda_0 & & & & \\
-u^3 & 1 & \ddots & & & \\
\vdots &  & \ddots  & \ddots & & \\
-(n-3)u^{n-1} & &  & 1 & u^1 + \lambda_0 &\\
(n-2)u^{n} &  &   &  &  1 & u^1 + \lambda_0
\end{array}\right),  \quad e = \pd{}{u^1}. 
\end{equation}
\item if $L(\mathsf p)$ has two complex conjugate eigenvalues $\mu_0,\Bar{\mu}_0$, where $\mu_0 = a_0 +i b_0$, then in a neighbourhood of $\mathsf p$ there exists a unique local coordinate system $\left(x^1,y^1, \dots, x^s, y^s \right), 2s = n$ such that 
\begin{equation}
    L = \left(\begin{array}{cccccc}
C^1 + \Lambda_0 & & & & & \\
 I & C^1 + \Lambda_0 & & & & \\
-C^3& I & \ddots & & & \\
\vdots &  & \ddots & \ddots & & \\
-(s-3)C^{s-1} & &  & I & C^1 + \Lambda_0 &\\
-(s-2)C^{s} &  &  &  &  I & C^1 + \Lambda_0
\end{array}\right), \quad e_i = \pd{}{x^1},
\end{equation}
where 
\begin{align*}
    C^p = &\left(\begin{array}{cc}
x^p & - y^p  \\
y^p & x^p  
\end{array}\right) \quad \text{for} \quad p \in \{1,\dots, s\}, \\
    \Lambda_{0} = &\left(\begin{array}{cc}
a_0 & - b_0  \\
b_0 & a_0  
\end{array}\right) , \quad I = \left(\begin{array}{cc}
1 & 0  \\
0 & 1  
\end{array}\right).
\end{align*}
\end{itemize}
\end{Theorem}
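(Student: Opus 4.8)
The plan is to prove the real case in full and to obtain the complex case by complexification at the end. Throughout I would lean on two structural consequences of the hypotheses. First, from $\mathcal N_L=0$ and $\mathcal{L}_e L=\Id$ one gets, by induction on $k$ using the map form \eqref{nijendef2} of the torsion at $L^{k-1}e$, the chain of identities $\mathcal{L}_{L^k e}L=L^k$; writing $e_k:=L^k e$ this yields the Lie-algebra relations $[e_j,e_k]=(k-j)\,e_{j+k-1}$, so the $e_k$ realise the algebra of the fields $t^k\partial_t$ with $e=e_0$. Second, taking traces of $\mathcal{L}_e(L^k)=kL^{k-1}$ gives $e(\tr L^k)=k\,\tr L^{k-1}$; in particular, with $\lambda:=\tfrac1n\tr L$, one has $e(\lambda)=1$, so $d\lambda\neq 0$.

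The first genuine step is to pin down the diagonal direction and prove that $e$ is cyclic. Since $\mathsf p$ is algebraically generic and $L$ is $\gl$-regular with a single eigenvalue at $\mathsf p$, the operator $L$ has the single real eigenvalue $\lambda$ in a whole neighbourhood, so $\tr L^k=n\lambda^k$ there. Combining this with the standard Nijenhuis identity $L^{*}\,d(\tr L)=\tfrac12\,d(\tr L^{2})$ (see \cite{bkm1}) gives $L^{*}d\lambda=\lambda\,d\lambda$, i.e. $d\lambda$ is an eigencovector of $L$. At $\mathsf p$ the operator $L(\mathsf p)$ is a single Jordan block, so its dual eigenspace is one-dimensional, spanned by the covector annihilating $\operatorname{Im}(L(\mathsf p)-\lambda_0\Id)$; as $\langle d\lambda,e\rangle=e(\lambda)=1\neq 0$, the covector $d\lambda(\mathsf p)$ spans that eigenspace yet does not annihilate $e(\mathsf p)$. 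Hence $e(\mathsf p)$ is cyclic and $e,Le,\dots,L^{n-1}e$ is a local frame near $\mathsf p$.

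Next I would build the coordinates. Set $u^1:=\lambda-\lambda_0$, so that $e(u^1)=1$; choosing the remaining coordinates to be functionally independent first integrals of $e$ (possible since $e$ is transverse to the level sets of $\lambda$) straightens $e$ to $\partial_{u^1}$. In any such system the unity condition reads simply $\partial_{u^1}L=\Id$, whence $L=(u^1+\lambda_0)\Id+N$ with $N$ nilpotent and independent of $u^1$; this already produces the diagonal $u^1+\lambda_0$ and shows that the whole $u^1$-dependence is exhausted. It then remains to normalise the $u^1$-independent nilpotent part $N$: the cyclic frame $e,Le,\dots$ fixes the subdiagonal $1$'s, while the Nijenhuis condition for $L=(u^1+\lambda_0)\Id+N$ together with the bracket relations $[e,e_k]=k\,e_{k-1}$ forces the first column into the stated shape, the integer coefficients $-(i-2)$ emerging from those structure constants. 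This final normalisation, with the accompanying sign and coefficient bookkeeping, is the step I expect to be the main obstacle: one must verify that under $u^1$-independent changes of $u^2,\dots,u^n$ the constrained nilpotent part carries no functional moduli and lands on precisely this companion-type form.

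Finally I would settle uniqueness and the complex case. Uniqueness holds because $u^1=\lambda-\lambda_0$ is canonical (it is read off from $\tr L$) and the residual $u^1$-independent freedom is completely absorbed in matching the rigid form above. For two complex conjugate eigenvalues $\mu_0,\bar\mu_0$ one runs the identical argument over $\mathbb{C}$ for the single complex eigenvalue $\mu=\tfrac1n\tr L$ of the complexified operator, obtaining the complex analogue of the normal form in a complex coordinate $z^p=x^p+iy^p$; realifying through the correspondence $a+ib\leftrightarrow\left(\begin{smallmatrix}a&-b\\ b&a\end{smallmatrix}\right)$ (so $z^p\leftrightarrow C^p$, $\mu_0\leftrightarrow\Lambda_0$, $1\leftrightarrow I$) converts this into the stated block form with $2s=n$ and $e=\partial_{x^1}$.
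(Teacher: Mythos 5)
Your preliminary steps are correct and in fact reproduce ingredients of the paper's argument ($e(\lambda)=1$ and hence $d\lambda\neq 0$; the relations $\mathcal{L}_{L^k e}L=L^k$ and $[e_j,e_k]=(k-j)e_{j+k-1}$, which are Lemma \ref{nijenhuiswithunityprop}; the cyclicity of $e$ and the frame $e,Le,\dots,L^{n-1}e$, which is Corollary \ref{jordanblockframe}). But the proof has a genuine gap exactly where you flag ``the main obstacle'': the claim that, once $e=\pd{}{u^1}$ and $L=(u^1+\lambda_0)\Id+N$ with $N$ a $u^1$-independent nilpotent part, the Nijenhuis condition and the bracket relations ``force the first column into the stated shape'' with no functional moduli. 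That assertion \emph{is} the hard normal-form theorem for a $\gl$-regular Nijenhuis operator near an algebraically generic point (Theorem \ref{thmjrdn}, proved in \cite{bkm1}), and you neither prove it nor invoke it. Worse, your order of operations blocks a simple citation: because you straighten $e$ first, any subsequent appeal to Theorem \ref{thmjrdn} produces canonical coordinates adapted to the flag of distributions $\operatorname{Image}(L-\lambda\,\Id)^k$, and the coordinate change involved has no reason to preserve $e=\pd{}{u^1}$. In your gauge you would have to re-prove the normal form under the extra constraint that all admissible changes of variables commute with $\pd{}{u^1}$ --- equivalently, that the symmetry pseudogroup of the canonical form acts transitively on solutions of $\mathcal{L}_e L=\Id$. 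That transitivity is precisely the new content of the theorem, not sign-and-coefficient bookkeeping, and it is absent from your proposal. Your uniqueness argument inherits the same gap, since ``the residual freedom is completely absorbed'' presupposes the unproved normalization.

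For comparison, the paper inverts your order: it first applies Theorem \ref{thmjrdn} to put $L$ (alone) in canonical form, which is legitimate since $e(\lambda)=1$ guarantees $d\lambda\neq 0$; it then studies an arbitrary unity in these coordinates. From $\mathcal{L}_e(L-\lambda\,\Id)=0$ it deduces that $e$ preserves each foliation $\operatorname{Image}(L-\lambda\,\Id)^k$, so $e^i=e^i(u^1,\dots,u^i)$ with $e^1=1$; the identity $(L^*-u^1\Id)\,de^k=de^{k-1}$ then yields a triangular PDE system for the components, and an induction adjusting one coordinate at a time (at the last step $y^n:=u^n-\int_0^{y^1}e^n(t)\,dt$) straightens $e$ while verifying explicitly that $L$ stays in canonical form. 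Uniqueness follows because an automorphism preserving $(L,e)$ preserves the whole frame $X_k=L^k e$ and hence commutes with all of their flows, forcing it to be the identity. To repair your proposal you would either have to carry out this straightening-within-the-symmetry-group argument, or genuinely re-derive the \cite{bkm1} normal form with $e$ frozen; as written, the central step is asserted rather than proved. Your treatment of the complex case (canonical complex structure, realification $z^p\leftrightarrow C^p$) does match the paper's, but it rests on the same missing real-case normalization.
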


The normal forms of $L$, used in the statement of Theorem \ref{t2}, were introduced in \cite{bkm1}. The solution of \eqref{unitycond} for these normal forms is not unique: there are a lot of vector fields, such that $\mathcal L_e L = \operatorname{Id}$. At the same time, the normal forms possess a symmetry group. Theorem \ref{t2} implies that this symmetry group acts transitively on all vector fields, satisfying \eqref{unitycond}, allowing one to effectively straighten the field. This simple observation allows one to formulate a version of Theorem \ref{t2} for different normal forms of Nijenhuis operators, the so-called upper triangular Toeplitz form.

\begin{Theorem}\label{t2_2}
Let $L$ be a $\gl$-regular Nijenhuis operator with a unity $e$ on a manifold $\mathsf M^n$. Assume that $\mathsf p \in \mathsf M^n$ is an algebraically generic point and has either one real or two conjugate complex eigenvalues. Then
\begin{itemize}
    \item if $L(\mathsf p)$ has single real eigenvalue $\lambda_0$, then in a neighbourhood of $\mathsf p$ there exists a unique local coordinate system $\left(u^1, \dots, u^{n}\right)$ such that 
    \begin{equation}
        L =  \left(\begin{array}{cccccc}
u^1 + \lambda_0 & & & & & \\
u^2+ 1 & u^1 + \lambda_0 & & & & \\
u^3 & \ddots & \ddots & & & \\
\vdots &  \ddots & \ddots & \ddots & & \\
u^{n-1} & & \ddots & \ddots & u^1 + \lambda_0 &\\
u^{n} & u^{n-1} &  \hdots & u^3 & u^2 + 1 & u^1 + \lambda_0
\end{array}\right),  \quad e = \pd{}{u^1}. 
\end{equation}
\item if $L(\mathsf p)$ has two complex conjugate eigenvalues $\mu_0,\Bar{\mu}_0$, where $\mu_0 = a_0 +i b_0$, then in a neighborhood of $\mathsf p$ there exists a unique local coordinate system $\left(x^1,y^1, \dots, x^s, y^s \right), 2s = n$ such that 
\begin{equation}
    L = \left(\begin{array}{cccccc}
C^1 + \Lambda_0 & & & & & \\
C^2+ I & C^1 + \Lambda_0 & & & & \\
C^3& \ddots & \ddots & & & \\
\vdots &  \ddots & \ddots & \ddots & & \\
C^{s-1} & & \ddots & \ddots & C^1 + \Lambda_0 &\\
C^{s} & C^{s-1} &  \hdots & C^3 & C^2 + I & C^1 + \Lambda_0
\end{array}\right), \quad e_i = \pd{}{x^1},
\end{equation}
where 
\begin{align*}
    C^p = &\left(\begin{array}{cc}
x^p & - y^p  \\
y^p & x^p  
\end{array}\right) \quad \text{for} \quad p \in \{1,\dots, s\}, \\
    \Lambda_{0} = &\left(\begin{array}{cc}
a_0 & - b_0  \\
b_0 & a_0  
\end{array}\right) , \quad I = \left(\begin{array}{cc}
1 & 0  \\
0 & 1  
\end{array}\right).
\end{align*}
\end{itemize}
\end{Theorem}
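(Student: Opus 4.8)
The plan is to obtain Theorem \ref{t2_2} from Theorem \ref{t2} rather than by solving \eqref{unitycond} afresh, exploiting that the Toeplitz form and the normal form of Theorem \ref{t2} are two presentations of the very same class of $\gl$-regular Nijenhuis operators. Both are normal forms, established in \cite{bkm1}, for a $\gl$-regular $L$ at an algebraically generic point with a single real eigenvalue (or a single pair of conjugate complex eigenvalues). Consequently there is a local diffeomorphism $\Phi$ carrying coordinates in which $L$ has the Toeplitz shape to coordinates in which it has the shape of Theorem \ref{t2}, and this $\Phi$ identifies the symmetry group of one form with that of the other.

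First I would fix coordinates realizing the Toeplitz form, which is possible by \cite{bkm1}; this determines the coordinates only up to the symmetry group $\mathcal{G}$ of that form. Writing the Toeplitz matrix as $L = (u^1+\lambda_0)I + (u^2+1)N + u^3 N^2 + \dots + u^n N^{n-1}$, with $N$ the nilpotent shift, makes the commutative-algebra structure (polynomials in $N$) manifest and will be convenient both for describing $\mathcal{G}$ and for comparing unity fields. For a fixed Toeplitz $L$ the equation $\mathcal{L}_e L = \operatorname{Id}$ has many solutions, so the task reduces to straightening a chosen solution to $\partial/\partial u^1$ by an element of $\mathcal{G}$.

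The decisive step is transitivity: $\mathcal{G}$ acts transitively on the set of all $e$ satisfying \eqref{unitycond} for the fixed Toeplitz $L$. I would deduce this by transporting, through $\Phi$, the analogous property for the normal form of Theorem \ref{t2} --- there, the existence part of Theorem \ref{t2} says precisely that the corresponding symmetry group carries every unity field to $\partial/\partial u^1$, while the uniqueness part says that the stabilizer of $\partial/\partial u^1$ is trivial. Since transitivity and triviality of the stabilizer are preserved under the identification of symmetry groups induced by $\Phi$, both transfer to $\mathcal{G}$. Once transitivity is in hand, choosing $\psi \in \mathcal{G}$ with $\psi_* e = \partial/\partial u^1$ while leaving $L$ Toeplitz yields the desired coordinate system, and triviality of the stabilizer gives uniqueness. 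The complex-conjugate case runs identically, with the scalar truncated-polynomial algebra replaced by its analogue built from the blocks $C^p$ and $I$ and with $e = \partial/\partial x^1$ playing the role of the straightened field.

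The main obstacle I anticipate is pinning down the transitivity claim precisely. Although it is natural once the two forms are identified, making it rigorous requires an explicit grip on $\mathcal{G}$ and on the solution family of \eqref{unitycond} in Toeplitz coordinates; the latter is a linear first-order system for the components of $e$ whose integrability is forced by the Nijenhuis condition, but the computation is bulkier than in Theorem \ref{t2} because the Toeplitz form is filled throughout its lower triangle rather than supported on two diagonals. The difficulty is thus primarily organizational --- keeping the polynomials-in-$N$ structure visible so that the $\mathcal{G}$-orbit of $\partial/\partial u^1$ and the space of unity fields can be matched term by term --- rather than conceptual.
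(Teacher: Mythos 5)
Your proposal is correct and takes essentially the same route as the paper: the authors prove Theorem \ref{t2} directly and then obtain Theorem \ref{t2_2} exactly as you do, from the observation that Theorem \ref{t2} forces the symmetry group of a normal form to act simply transitively on the solutions of \eqref{unitycond}, which transfers to the Toeplitz form and lets one straighten $e$ to $\pd{}{u^1}$ (itself a unity field for the Toeplitz matrix, since $u^1$ enters only through the diagonal terms $u^1+\lambda_0$). The only slip is bibliographic: the Toeplitz normal form belongs to the $\gl$-regular theory developed in \cite{bkm2} rather than to \cite{bkm1}.
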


We say that $L$ is brought to a {\it semi-normal form} if the functional parameters of the corresponding form are defined up to equivalence. The equivalence relation on functional parameters is defined by a coordinate change of an operator. The next Theorem provides all semi-normal forms of Nijenhuis operators with a unity in dimension two in the analytic category.

\begin{Theorem}\label{t3}
Let $L$ be a Nijenhuis operator with a unity $e$ in the analytic category in dimension two. Then in a neighbourhood of a point $\mathsf p$ the pair $\left(L, e\right)$ can be brought to one of the following forms:
\begin{enumerate}
    \item $$e = \partial_x, \quad L = \left( \begin{array}{cc}
         \lambda_0 + x & 0  \\
         0 & \lambda_0 + x
    \end{array}\right), \quad \lambda_0 \in \mathbb{R}.$$
    \item $$e = \partial_x, \quad L = \left( \begin{array}{cc}
         \lambda_0 & 0  \\
         0 & \lambda_0
    \end{array}\right) + \left( \begin{array}{cc}
         x & - \frac{1}{2}  \\
         2(y+d) & x
    \end{array}\right), \quad d, \lambda_0 \in \mathbb{R}.$$
    \item $$e = \partial_x, \quad L = \left( \begin{array}{cc}
         \lambda_0 & 0  \\
         0 & \lambda_0
    \end{array}\right) + \left( \begin{array}{cc}
         x &  \mp \frac{k}{2} y^{k-1}  \\
        \frac{2}{k} y  & x
    \end{array}\right), \quad k \in \mathbb{N}, \lambda_0 \in \mathbb{R}.$$
    
    \item $$e = \partial_x, \quad L = \left( \begin{array}{cc}
         \lambda_0 & 0  \\
         0 & \lambda_0
    \end{array}\right) + \left( \begin{array}{cc}
         x & 0  \\
         f(x, y) & x
    \end{array}\right), \quad \lambda_0 \in \mathbb{R}.$$
    Here $f(x, y)$ is an arbitrary function. 

    Two semi-normal forms with functional parameters $f$ and $\bar f$ respectively are equivalent if and only if there exists a function $h(x, y)$, such that $\bar f(x, h(x, y)) = \pd{h}{y}(x, y) f(x, y)$ and $\pd{h}{y} (0, 0) \neq 0$. 
\end{enumerate}
\end{Theorem}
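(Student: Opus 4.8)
The plan is to put everything in coordinates adapted to the unity field. By the trace identity \eqref{tracecond} the field $e$ never vanishes, so I would first straighten it and choose local coordinates $(x,y)$ with $e=\partial_x$. For a coordinate vector field the Lie derivative of a $(1,1)$-tensor is the entrywise partial derivative, so the unity equation $\mathcal{L}_e L=\Id$ reads $\partial_x L=\Id$. Integrating in $x$ gives the rigid normal form
\[
L=x\,\Id+M(y),\qquad M(y)=\begin{pmatrix}a(y)&b(y)\\ c(y)&d(y)\end{pmatrix},
\]
so the whole pair $(L,e)$ is encoded by a single matrix-valued function $M$ of the transverse variable. Every form in the statement has exactly this shape, with diagonal $\lambda_0+x$.

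Next I would impose $\mathcal{N}_L=0$. Evaluating the torsion on $\partial_x,\partial_y$ (whose bracket vanishes) collapses it to two scalar relations in $y$:
\[
(a-d)+c\,(a+d)'=0,\qquad (bc)'-2b+a'(a-d)=0.
\]
I would then fix the residual freedom: the coordinate changes preserving $e=\partial_x$ are precisely $x\mapsto x+g(y)$ and $y\mapsto Y(y)$, under which $a+d\mapsto a+d-2g$ and $c\mapsto Y'c$, while $b$ transforms with weight $1/Y'$ together with correction terms in $g'$. Choosing $g$ so that $a+d$ becomes a constant $2\lambda_0$, the first torsion relation immediately forces $a=d=\lambda_0$; renaming the off-diagonal entries, the second becomes the single equation $(bc)'=2b$. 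This is the heart of the matter: the diagonal is frozen at $\lambda_0+x$ and all information sits in the antidiagonal pair $(b,c)$.

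It remains to classify solutions of $(bc)'=2b$ modulo the leftover gauge $y\mapsto Y(y)$ fixing the base point, under which $b\,dy$ is a $1$-form, $c\,\partial_y$ is a vector field, and $P:=bc$ is an honest function with $b=\tfrac12 P'$. The dichotomy is governed by the vanishing of $b$. If $b\equiv c\equiv 0$ we are in form (1). If $b\equiv 0$ but $c\not\equiv 0$, then $c$ is unconstrained and we obtain form (4), whose modulus is the transverse field $c(y)\,\partial_y$ up to diffeomorphism; this is exactly the content of the stated equivalence $\bar f(h)=h_y f$. If $b(\mathsf p)\neq 0$, then $P'(\mathsf p)\neq 0$ and a reparametrisation straightens $P$ to the affine function $-(y+d)$, giving form (2) with the single real modulus $d=-P(\mathsf p)$. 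Finally, if $b$ vanishes at $\mathsf p$ but not identically, analyticity forces $P$ to have an isolated zero of finite order, and since $c=P/b$ must stay regular, $c$ vanishes to first order; the analytic Morse-type reduction then brings $P$ to $\pm y^{k}$, giving form (3) with discrete modulus $k$ and a sign. I would also note that the spectral type of $L(\mathsf p)$ is read off from the sign of $P(\mathsf p)=b(\mathsf p)c(\mathsf p)$, which is why form (2) already covers both real-distinct and complex-conjugate eigenvalues at the base point, and no separate ``two real eigenvalues'' case is needed.

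The step I expect to be the main obstacle is form (4). There the classification carries a genuine functional modulus, namely the transverse vector field $c(y)\,\partial_y$ taken up to local diffeomorphism, and the work is to show that the residual gauge produces exactly the stated equivalence relation --- neither finer nor coarser --- and that this is realised convergently in the analytic category. By contrast, forms (1)--(3) are finite-dimensional reductions once $(bc)'=2b$ is in hand. A recurring technical point, and the place where the analytic hypothesis is used, is verifying that each normalisation (the trace adjustment, the straightening of $P$, and the extraction of the $k$-th root) stays within the analytic class.
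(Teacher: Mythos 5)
Your proposal is correct and is essentially the paper's own argument in lightly different clothing: after straightening $e$ the paper likewise freezes $L = x\,\Id + M(y)$ (via $\mathcal L_{\partial_x}L=\Id$ together with the coordinates supplied by Corollary \ref{charcoefsrelationscor}, which give $\operatorname{tr} L = 2x$, $\det L = x^2+f(y)$), imposes the equivalent two-dimensional Nijenhuis identity $L^{*}\,\mathrm{d}\det L = \det L\,\mathrm{d}\operatorname{tr} L$ --- your two torsion components are exactly this condition, collapsing to $(bc)'=2b$ after the trace gauge --- and runs the identical trichotomy, your cases $b(\mathsf p)\neq 0$, $b(\mathsf p)=0$ with $b\not\equiv 0$, and $b\equiv 0$ corresponding to the paper's $df\neq 0$, $f=\pm y^{k}g(y)+d$ with $k\geq 2$, and $f\equiv d$ for $f=-bc$, with the same analytic normalizations $y\mapsto f(y)-d$ and $y\mapsto y\,g(y)^{1/k}$. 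Your conclusions agree with the paper's proof even where the printed statement is looser: the paper's case (4) likewise produces a parameter depending on $y$ alone, and its treatment of the form-(4) equivalence is exactly the residual-gauge computation ($\bar x = x$, $\bar y = h(y)$, $\bar f(h)=h'f$) that you identify.
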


The next Theorem treats a special case of Nijenhuis operators with a unity. As we will see later, it is useful from the point of view of applications. The other cases will be treated elsewhere.

\begin{Theorem}\label{t4}
Let $L$ be a Nijenhuis operator with a unity $e$ in dimension three in the analytic category. Assume that $L$ has two distinct eigenvalues at a point of general position. Then in a neighbourhood of a point $\mathsf p_0 \in \mathsf M^n$, such that $L(\mathsf p_0)$ has a single eigenvalue, the pair $\left(L, e\right)$ can be brought to the following form:
 \begin{equation*}
   \label{prenormalform}
  e = \pd{}{x^1}, \quad L=\left(\begin{array}{lll}
x^{1} +\lambda_0 & 0 & 0 \\
\frac{x^{2}}{k} &  \pm (x^{2})^{k} + x^1 +\lambda_0 & 0 \\
f(x^2,x^3) & g(x^2,x^3) & \pm (x^{2})^{k} + x^1 +\lambda_0 \\
\end{array}\right), \quad k \in \mathbb{N}, \lambda_0 \in \mathbb{R}.
\end{equation*}
 Here $f$ and $g$ are analytic functions and the following PDE is satisfied:
\begin{equation*}
 \frac{x^{2}}{k}\pd{g}{x^2} + f\pd{g}{x^3} - g\pd{f}{x^3} = \frac{k-1}{k} g. 
\end{equation*}
Two semi-normal forms with functional parameters $f, g$ and $\Bar{f}, \Bar{g}$ respectively are equivalent if and only if there exists a function $h(x^2,x^3)$, $\pd{h}{x^3}(0,0) \neq 0$ such that the following identities hold:
\begin{align*}
& \Bar{g}(x^2, h(x^2,x^3))=g\left(x^2, x^3\right) \frac{\partial h}{\partial x^3}\left(x^2, x^3\right), \\
& \Bar{f}(x^2, h(x^2,x^3))=\frac{x^2}{k} \frac{\partial h}{\partial x^2}\left(x^2, x^3\right)+f\left(x^2, x^3\right) \frac{\partial h}{\partial x^3}\left(x^2, x^3\right) . 
\end{align*}

\end{Theorem}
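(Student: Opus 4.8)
The plan is to straighten the unity vector field and then read off the rigidity of $L$ directly from the Nijenhuis condition through the eigenvalue $1$-forms. Since $e\neq 0$ everywhere by \eqref{tracecond}, I would first choose coordinates with $e=\partial_{x^1}$. In such coordinates $\mathcal L_e L=\partial_{x^1}L$ componentwise, so the unity condition $\mathcal L_e L=\operatorname{Id}$ integrates immediately to
$$
L=x^1\operatorname{Id}+A(x^2,x^3),
$$
where $A$ does not depend on $x^1$. The residual freedom preserving $e=\partial_{x^1}$ is $x^1\mapsto x^1+\phi(x^2,x^3)$ together with an arbitrary analytic change of $(x^2,x^3)$; under the $x^1$-shift the eigenvalues of $A$ change by $-\phi$.

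Next I would control the eigenvalues analytically. They are $x^1+\alpha_i(x^2,x^3)$, with $\alpha_i$ the eigenvalues of $A$, and by hypothesis there are generically two of them (one simple, one double) which all coincide at $\mathsf p_0$. I would argue that $\chi_L$ factors analytically as $(\lambda-\lambda_{(1)})(\lambda-\lambda_{(2)})^2$ near $\mathsf p_0$: the quadratic factor carrying the double eigenvalue has identically vanishing discriminant, so its root $\lambda_{(2)}$ is analytic, and the simple branch $\lambda_{(1)}$ is then analytic as well. Using the $x^1$-shift with $\phi=\alpha_1-\lambda_0$ I normalise the simple eigenvalue to $\lambda_{(1)}=x^1+\lambda_0$. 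I also record that the double eigenvalue must have geometric multiplicity one, i.e.\ a genuine $2\times2$ Jordan block ($g\not\equiv0$ below), since otherwise $L$ would split globally by Theorem~\ref{t1} and $\mathsf p_0$ would not be a singular point of the stated type.

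The heart of the argument is the eigenform identity $L^{*}\,d\lambda_{(i)}=\lambda_{(i)}\,d\lambda_{(i)}$, valid for each eigenvalue of a Nijenhuis operator on the algebraically generic set and extended to $\mathsf p_0$ by analyticity. Applied to $\lambda_{(1)}=x^1+\lambda_0$, for which $d\lambda_{(1)}=dx^1$, it forces the first row of $L$ to be $(\lambda_{(1)},0,0)$, so $L$ is block lower triangular. Writing $\lambda_{(2)}=x^1+\lambda_0+\delta(x^2,x^3)$ and applying the same identity to $d\lambda_{(2)}=dx^1+\delta_{x^2}\,dx^2+\delta_{x^3}\,dx^3$, the $dx^2$-component gives $\delta_{x^3}\,g=0$; since $g\not\equiv0$, this yields $\delta=\delta(x^2)$, and after a reparametrisation of $x^2$ one gets $\delta=\pm(x^2)^{k}$ with $k=\operatorname{ord}_{\mathsf p_0}\delta$. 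The $dx^1$-component then gives $\delta_{x^2}\,L^2_1=\delta$, i.e.\ the $(2,1)$-entry is $L^2_1=x^2/k$. Bringing the Jordan block to lower-triangular form fixes the lower-right block as $\left(\begin{smallmatrix}\lambda_{(2)}&0\\ g&\lambda_{(2)}\end{smallmatrix}\right)$ and leaves $f=L^3_1$, $g=L^3_2$ as the only free functions. Substituting this ansatz into $\mathcal N_L=0$ I expect every component to hold automatically except one, which should produce exactly the displayed PDE $\tfrac{x^2}{k}g_{x^2}+f\,g_{x^3}-g\,f_{x^3}=\tfrac{k-1}{k}g$.

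Finally I would compute the residual gauge group. The transformations preserving $e=\partial_{x^1}$, the two normalised eigenvalues, and the entry $L^2_1=x^2/k$ are precisely $x^3\mapsto h(x^2,x^3)$ with $\partial_{x^3}h(0,0)\neq0$ (keeping $x^2$ fixed and adjusting $x^1$ as forced), and tracking the transformation of the column $(f,g)^{T}$ gives the stated equivalence relations. The main obstacle I anticipate is the analytic step above: guaranteeing that the simple and double eigenvalues are genuine analytic functions across the singular point $\mathsf p_0$, so that the eigenform identity and its consequences ($\delta_{x^3}=0$, $L^2_1=x^2/k$) extend there, and then verifying that the full Nijenhuis system really collapses to the single PDE rather than imposing hidden extra constraints on $f$ and $g$.
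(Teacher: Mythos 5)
Your outline does track the paper's own route (straighten $e$ so that $L=x^1\operatorname{Id}+A(x^2,x^3)$, normalise the simple eigenvalue, use the eigenform identity \eqref{inveq} to triangularise and to force $L^2_1=x^2/k$, let the torsion produce the PDE, then compute the residual gauge), and your gauge-group description and the derivation $\delta_{x^2}\,l^2_1=\delta\Rightarrow l^2_1=x^2/k$ agree with the paper. But the step you flag as the ``main obstacle'' is a genuine gap, and the argument you sketch for it is circular: to speak of ``the quadratic factor carrying the double eigenvalue'' you must already know that $\chi_L$ splits near $\mathsf p_0$ into a linear and a quadratic factor with \emph{analytic} coefficients, and that is exactly what is not automatic at $\mathsf p_0$, where all three roots coincide and the coprime splitting of Theorem~\ref{t1} is unavailable. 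The paper closes this with a concrete device you would need to reproduce: after normalising $\operatorname{tr}L$, the identically vanishing discriminant reads $-4f^3-27g^2=0$ for the coefficient functions, and Lemma~\ref{3dimfirstlemma} extracts an analytic ``cube root'' $h$ (with $h^2=-f$, $h^3\sim g$) by noting that $h=-g/f$ is meromorphic while its square is analytic; only then are the branches $x^1-\tfrac{2\sqrt{3}}{3}h$ and $x^1+\tfrac{\sqrt{3}}{3}h$ analytic across $\mathsf p_0$ and your eigenform computations legitimate there.

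Two further problems. First, your claim that $g\not\equiv 0$ (``genuine Jordan block, else $L$ splits by Theorem~\ref{t1}'') is wrong: Theorem~\ref{t1} needs coprime factors of $\chi_L$ \emph{at the point}, which fails at $\mathsf p_0$, and the semi-normal form of Theorem~\ref{t4} genuinely contains $g\equiv 0$ (e.g.\ $f=g=0$, $l^2_1=x^2/k$ satisfies the PDE trivially and meets all hypotheses), so your inference $\delta_{x^3}\,g=0\Rightarrow\delta=\delta(x^2)$ has a hole precisely in the degenerate case. Second, there is an ordering defect: you apply the $dx^2$-component of the eigenform identity assuming the lower-right block is already lower triangular with $g$ in the corner, but producing an analytic triangularisation across the singular locus is itself nontrivial content --- in the paper this is Lemma~\ref{3dimsecondlemma}, a two-dimensional statement whose proof must handle the meromorphic quantities $2l^1_2/(l^1_1-l^2_2)$, the case distinction $a\equiv 0$ versus $a\not\equiv 0$, and which simultaneously normalises the eigenvalue difference to $\pm x^k$; a pointwise Jordan form does not give an analytic frame where the Segre characteristic degenerates. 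Finally, the verification that $\mathcal N_L=0$ collapses to the single PDE \eqref{3dimnijenhuiscond} with no hidden constraints on $f,g$ is a finite computation, but it is part of the proof and you have deferred rather than done it.
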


\begin{Corollary}\label{c1}
In the assumptions of Theorem \ref{t4} suppose that in the semi-normal form $f(0, 0) \neq 0$. Then there exists a coordinate system, in which
\begin{equation*}
    e = \pd{}{x^1}, \quad L = \left(\begin{array}{ccc}
    x^1 +\lambda_0 & 0 & 0 \\
    \frac{x^{2}}{k} & \pm (x^{2})^{k} + x^1 +\lambda_0 & 0  \\
     1 &  F(x^{2}e^{-\frac{x^{3}}{k}})e^{\frac{(k-1)x^{3}}{k}} & \pm (x^{2})^{k} +x^1 +\lambda_0
  \end{array}\right), \quad k \in \mathbb{N}, \lambda_0 \in \mathbb{R},
\end{equation*}
where $F$ is an analytic function.
Two semi-normal forms with functional parameters $F$ and $\Bar{F}$ respectively are equivalent if and only if there exists a function $q(x^2)$ such that $\Bar{F}(x^{2}e^{-\frac{x^{3} + x^{2}q(x^2)}{k}})e^{\frac{(k-1)x^{2}q(x^2)}{k}}= F(x^{2}e^{-\frac{x^{3}}{k}})$. 
\end{Corollary}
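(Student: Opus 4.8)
The plan is to start from the semi-normal form furnished by Theorem~\ref{t4} and to exploit the extra hypothesis $f(0,0)\neq 0$ as the lever that gauge-fixes the lower-left entry $f$ to the constant $1$. By Theorem~\ref{t4} the coordinate changes that preserve the shape of $(L,e)$ --- keeping $e=\pd{}{x^1}$, the eigenvalue $\pm(x^2)^k+x^1+\lambda_0$ and the entry $\tfrac{x^2}{k}$ --- are exactly $\bar x^1=x^1$, $\bar x^2=x^2$, $\bar x^3=h(x^2,x^3)$ with $\pd{h}{x^3}(0,0)\neq 0$, under which $f$ transforms by $\bar f(x^2,h)=\tfrac{x^2}{k}\pd{h}{x^2}+f\,\pd{h}{x^3}$. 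Hence normalizing $f$ to $1$ amounts to solving the first-order linear PDE
\begin{equation*}
\frac{x^2}{k}\,\pd{h}{x^2}+f\,\pd{h}{x^3}=1 .
\end{equation*}

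First I would settle the analytic solvability of this transport equation near $\mathsf p_0$. Its characteristic field $X=\tfrac{x^2}{k}\partial_{x^2}+f\,\partial_{x^3}$ is nonzero at $\mathsf p_0$, since $x^2(\mathsf p_0)=0$ forces $X(\mathsf p_0)=f(0,0)\,\partial_{x^3}\neq 0$; in particular the hypersurface $\{x^3=0\}$ is non-characteristic there. Solving the Cauchy problem $Xh=1$, $h|_{\{x^3=0\}}=0$ by characteristics in the analytic category yields an analytic $h$ with $\pd{h}{x^3}(0,0)=1/f(0,0)\neq 0$, so the change of variables is admissible and brings $f$ to $1$. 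The delicate point is that $X$ degenerates along the singular locus $\{x^2=0\}$ (its $\partial_{x^2}$-component vanishes and this locus is $X$-invariant); choosing the transversal $\{x^3=0\}$ rather than $\{x^2=0\}$ is precisely what keeps the Cauchy data non-characteristic and the solution analytic across the singularity.

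With $f\equiv 1$ the compatibility PDE of Theorem~\ref{t4} loses its $g\,\pd{f}{x^3}$ term and collapses to
\begin{equation*}
\frac{x^2}{k}\,\pd{g}{x^2}+\pd{g}{x^3}=\frac{k-1}{k}\,g .
\end{equation*}
I would integrate this by characteristics: the field $\tfrac{x^2}{k}\partial_{x^2}+\partial_{x^3}$ has first integral $w=x^2e^{-x^3/k}$, while $e^{-(k-1)x^3/k}$ is an integrating factor along its flow, so $g\,e^{-(k-1)x^3/k}$ is constant on characteristics. This gives $g=F\!\left(x^2e^{-x^3/k}\right)e^{(k-1)x^3/k}$ for an analytic one-variable function $F$, the entry displayed in the Corollary; since the change of $x^3$ leaves $x^1,x^2$ fixed, every other entry of $L$ is untouched.

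It remains to compute the residual freedom and its action on $F$. The changes $\bar x^3=h$ that preserve $f\equiv 1$ are those with $Xh=1$ for $X=\tfrac{x^2}{k}\partial_{x^2}+\partial_{x^3}$, i.e.\ $h=x^3+\phi(w)$ with $\phi$ in the kernel of the transport operator (a function of $w$ alone); admissibility forces $\phi(0)=0$, and parametrizing $\phi$ by its restriction to the transversal $\{x^3=0\}$, where $w=x^2$, identifies this freedom with a single arbitrary function $q(x^2)$. Substituting $h=x^3+\phi(w)$ into the density law $\bar g(x^2,h)=g\,\pd{h}{x^3}$ and re-expressing both sides through $F$ and $\bar F$ produces the stated relation between the two functional parameters. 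I expect the genuine obstacle to be exactly this last bookkeeping: one must carry the weight $\pd{h}{x^3}$ along with the reparametrization $w\mapsto we^{-\phi(w)/k}$ of the argument of $F$, and verify that the resulting correspondence is reflexive, symmetric and transitive so that it indeed defines the asserted equivalence of semi-normal forms.
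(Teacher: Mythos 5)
Your first two steps coincide with the paper's own proof: the paper normalizes $f$ to $1$ by solving $\pd{h}{x^3}=\bigl(1-\tfrac{x^2}{k}\pd{h}{x^2}\bigr)/f$ via Cauchy--Kovalevskaya with data on $\{x^3=0\}$ (exactly your non-characteristic Cauchy problem, with $\pd{h}{x^3}(0,0)=1/f(0,0)\neq 0$), and then integrates the reduced equation $\tfrac{x^2}{k}\pd{g}{x^2}+\pd{g}{x^3}=\tfrac{k-1}{k}\,g$ along the characteristics of $\tfrac{x^2}{k}\partial_{x^2}+\partial_{x^3}$ to obtain $g=F(x^2e^{-x^3/k})e^{(k-1)x^3/k}$. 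Up to this point your argument is correct and identical in substance to the paper's.

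The gap is the final step, which you deferred as ``bookkeeping'' and merely asserted to produce the stated relation; it does not. Your description of the residual freedom is the correct one: the solutions of $\tfrac{x^2}{k}\pd{h}{x^2}+\pd{h}{x^3}=1$ are exactly $h=x^3+\phi(w)$ with $w=x^2e^{-x^3/k}$ and $\phi(0)=0$. But for such $h$ one has $\pd{h}{x^3}=1-\tfrac{w}{k}\phi'(w)$ and $x^2e^{-h/k}=we^{-\phi(w)/k}$, so the density law $\bar g(x^2,h)=g\,\pd{h}{x^3}$ yields
\begin{equation*}
\Bar{F}\bigl(we^{-\phi(w)/k}\bigr)\,e^{\frac{(k-1)\phi(w)}{k}}=F(w)\Bigl(1-\tfrac{w}{k}\,\phi'(w)\Bigr),
\end{equation*}
which differs from the Corollary's relation by the Jacobian weight $1-\tfrac{w}{k}\phi'(w)$, and the two relations are genuinely inequivalent: for $k=1$, $F\equiv 1$, $\phi(w)=w$ the weighted relation makes $F$ equivalent to the non-constant $\Bar F(u)=1-w(u)$ (where $we^{-w}=u$), while the factor-free relation of the Corollary would force $\Bar F\equiv 1$. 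Note the contrast with the paper's own proof: there the residual changes are taken to be $z^3=y^3+z^2q(z^2)$, for which $\pd{h}{y^3}=1$ and the factor-free law $g_{\text{new}}(z^2,z^3)=g(z^2,z^3-z^2q(z^2))$ follows at once --- but that family gives $\tfrac{y^2}{k}\pd{h}{y^2}+\pd{h}{y^3}=1+\tfrac{y^2}{k}(q+y^2q')$, so it preserves $f\equiv 1$ only for $q\equiv 0$ and is inconsistent with the transformation law of Theorem \ref{t4}. In other words, your residual group is the consistent one, and carrying out the computation you skipped shows it does \emph{not} reproduce the equivalence stated in the Corollary; your proposal is therefore incomplete precisely at its load-bearing step, and the discrepancy hiding there is real (it in fact exposes a flaw in the Corollary's stated equivalence), not mere bookkeeping.
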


\begin{Corollary}\label{c2}
In the assumptions of Theorem \ref{t4} suppose that in the semi-normal form $g(0, 0) \neq 0$. Then there exists a coordinate system, in which
\begin{equation*}
    e = \pd{}{x^1}, \quad L = \left(\begin{array}{ccc}
    x^1 +\lambda_0 & 0 & 0 \\
    \frac{x^{2}}{k} & \pm (x^{2})^{k} + x^1 +\lambda_0 & 0  \\
     \frac{1-k}{k}x^{3}  & 1 & \pm (x^{2})^{k} +x^1 +\lambda_0
  \end{array}\right), \quad k \in \mathbb{N}, \lambda_0 \in \mathbb{R}.
\end{equation*}
\end{Corollary}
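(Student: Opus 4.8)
The plan is to realise the asserted matrix as a distinguished representative of the equivalence class of the semi-normal form supplied by Theorem \ref{t4}, and to construct the gauge function $h$ explicitly. First I would check that the target pair $\bar f = \frac{1-k}{k}x^3$, $\bar g = 1$ is itself a legitimate semi-normal form, i.e.\ that it solves the compatibility PDE of Theorem \ref{t4}: since $\pd{\bar g}{x^2}=\pd{\bar g}{x^3}=0$ and $\pd{\bar f}{x^3}=\frac{1-k}{k}$, both sides of the PDE equal $\frac{k-1}{k}$. It then suffices to produce an analytic $h(x^2,x^3)$ with $\pd{h}{x^3}(0,0)\neq 0$ satisfying the two equivalence identities of Theorem \ref{t4} for this $(\bar f,\bar g)$ and the given data $(f,g)$ with $g(0,0)\neq 0$.

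With the target substituted, the two identities read $g\,\pd{h}{x^3}=1$ and $\frac{x^2}{k}\pd{h}{x^2}+f\,\pd{h}{x^3}=\frac{1-k}{k}h$. Because $g(0,0)\neq 0$, the first identity is the honest equation $\pd{h}{x^3}=1/g$, which I integrate in $x^3$ to get $h=H(x^2,x^3)+C(x^2)$, where $H=\int_0^{x^3}\frac{dt}{g(x^2,t)}$ is analytic and $C$ is a still-free analytic function of $x^2$; note $\pd{h}{x^3}(0,0)=1/g(0,0)\neq 0$ holds automatically. Feeding this into the second identity turns it into the relation $\frac{x^2}{k}C'-\frac{1-k}{k}C=R$, where $R=-\frac{f}{g}-\frac{x^2}{k}\pd{H}{x^2}+\frac{1-k}{k}H$ gathers the already-known terms.

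The decisive observation is that $R$ is in fact independent of $x^3$, and this is exactly where the PDE of Theorem \ref{t4} enters. Differentiating $R$ in $x^3$ and using $\pd{H}{x^3}=1/g$ together with $\pd{}{x^3}\pd{H}{x^2}=\pd{}{x^2}(1/g)$, one finds that $g^2\,\pd{R}{x^3}$ equals $\frac{1-k}{k}g$ plus the left-hand side $\frac{x^2}{k}\pd{g}{x^2}+f\pd{g}{x^3}-g\pd{f}{x^3}$ of that PDE; since the PDE asserts the latter equals $\frac{k-1}{k}g$, the two contributions cancel and $\pd{R}{x^3}=0$. Evaluating at $x^3=0$, where $H$ and $\pd{H}{x^2}$ vanish, gives the explicit $R=R(x^2)=-f(x^2,0)/g(x^2,0)$. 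Thus the second identity collapses to the single linear ODE $x^2C'+(k-1)C=kR(x^2)$ in the one variable $x^2$; any solution $C$ yields an $h$ satisfying both identities and hence realising the claimed form.

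The main obstacle is the analytic solvability of this ODE at its singular point $x^2=0$. I would use the integrating factor $(x^2)^{k-1}$, giving $\big((x^2)^{k-1}C\big)'=k(x^2)^{k-2}R(x^2)$ and hence $C=(x^2)^{1-k}\int_0^{x^2}k\,t^{k-2}R(t)\,dt$; equivalently, matching power series yields $c_j=k r_j/(j+k-1)$. For $k\geq 2$ the denominator never vanishes and the integrand $t^{k-2}R(t)$ is analytic, so the integral vanishes to order $k-1$ and $C$ is analytic near $0$, completing the argument. The genuinely delicate case is the resonance $k=1$, where the indicial relation at $j=0$ forces the compatibility condition $R(0)=-f(0,0)/g(0,0)=0$; this is the point where the bare hypothesis $g(0,0)\neq 0$ must be supplemented (e.g.\ by the vanishing of $f$ at the origin, to be read alongside Corollary \ref{c1}), and it is the step I expect to demand the most care.
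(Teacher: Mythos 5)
Your proof is correct and is, at bottom, the paper's argument compressed into a single gauge transformation. The paper proceeds in two steps: it first normalizes $g \equiv 1$ by solving $\pd{h}{x^3} = 1/g$ (Cauchy--Kovalevskaya, using $g(0,0) \neq 0$), notes that the compatibility PDE then reduces to $\pd{f}{y^3} = -\frac{k-1}{k}$, hence $f = -\frac{k-1}{k}y^3 + y^2 s(y^2)$, and finally kills $s$ by the residual transformation $z^3 = y^3 + y^2 q(y^2)$ with $\frac{z^2}{k}q' + q = -s$, solved by a power series with coefficients $b_i = -\frac{k}{k+i}a_i$. Your ansatz $h = H + C(x^2)$ merges these two steps; your verification $\pd{R}{x^3} = 0$ is the same use of the PDE by which the paper simplifies the equation after the first step, and your denominators $j+k-1$ are exactly the paper's $k+i$ after the index shift $j = i+1$. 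Your closing worry is resolved just as you guessed: the paper proves this corollary under the heading ``$g(0,0) \neq 0$, $f(0,0) = 0$,'' i.e., in the case complementary to Corollary \ref{c1}, and the corollary statement tacitly carries that extra hypothesis. One refinement to your case analysis: $f(0,0) = 0$ is needed for \emph{every} $k$, not only at the $k = 1$ resonance, because the paper's equivalence (see the Remark after \eqref{seminormalform}) requires $h(0,0) = 0$, and then $\bar f(0,0) = f(0,0)\,\pd{h}{x^3}(0,0)$ while the target $\bar f = \frac{1-k}{k}x^3$ vanishes at the origin; concretely, for $k \geq 2$ your solution has $C(0) = \frac{k}{k-1}r_0 = -\frac{k}{k-1}\,f(0,0)/g(0,0)$, so your $h$ fixes the origin precisely when $f(0,0) = 0$ and otherwise recenters the form at a different point of the singular locus. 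Under $f(0,0) = 0$ one has $R(0) = 0$, all resonances disappear, and your construction handles every $k \in \mathbb{N}$ in agreement with the paper.
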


The formulas in Theorem \ref{t4} imply that $(x^2/k, f(x^2, x^3))^T$ is transformed under the coordinate change $\bar x^2 = x^2, \bar x^3 = h(x^2, x^3)$ as a vector field. There are two distinct cases: $f(0, 0) \neq 0$ and $f(0, 0) = 0$. The first is treated by Corollary \ref{c1}. The second is related to the problem of linearization of vector fields on a plane, which is a classical and very complicated problem.


\section{Applications to \texorpdfstring{$F$}{F}-manifolds}

To define an $F$-manifold one starts with a manifold $\mathsf M^n$, equipped with a triple: a vector field $e$, a vector field $E$ and a tensor of type $(1, 2)$ denoted by $a$. The tensor $a$ induces a natural bilinear operation on vector fields $\xi, \eta$ as
$$
\xi \circ \eta = a(\xi, \eta)
$$
The definition of an $F$-manifold is given in terms of $e, E$ and an operation $\circ$: we say that $\left(\mathsf M^n, \circ, e, E\right)$ is an {\it $F$-manifold} if $\circ, e, E$ satisfy the following conditions:
\begin{enumerate}
    \item $\circ$ defines the structure of commutative associative algebra on vector fields,
    \item $e$ is a unity of the algebra $\circ$,
    \item For any vector fields $\xi, \eta, \zeta, \theta$ we have
    \begin{equation}\label{hertlingmanincond}
    \begin{aligned}
    0 & = [\xi \circ \eta, \zeta \circ \theta] - [\zeta, \xi \circ \eta] \circ \theta - \zeta \circ [\xi \circ \eta, \theta] - \xi \circ [\eta, \zeta \circ \theta] + \xi \circ [\eta, \zeta] \circ \theta + \\
    & + \xi \circ \zeta \circ [\eta, \theta] - \eta \circ [\xi, \zeta \circ \theta] + \eta \circ [\xi, \zeta] \circ \theta + \eta \circ \zeta \circ [\xi, \theta].
    \end{aligned}    
    \end{equation}
    Here the square brackets $\left[\cdot, \cdot \right]$ define the standard commutator of vector fields. 
    \item For any pair of vector fields $\xi, \eta$
    \begin{equation}\label{eulervectorfieldcond}
        [E, \xi \circ \eta] - [E, \xi]\circ \eta - \xi \circ [E, \eta] = \xi \circ \eta.
    \end{equation}
    A vector field $E$ is called an {\it Euler vector field}.
\end{enumerate}
It is not obvious, but the formula \eqref{hertlingmanincond} defines the tensor field of type $(1, 4)$. The formula itself is related to the so-called Ako-Yano bracket (see \cite{aky} and discussion in \cite{magri}), which plays an important role in the theory of quasilinear integrable systems. The condition \eqref{eulervectorfieldcond} can be rewritten as $\mathcal L_E a = a$, where $\mathcal L_E$ stands for the Lie derivative along a vector field $E$.

The concept of an $F$-manifold was introduced by Hertling and Manin in \cite{hm, h1} as a generalization of Frobenius manifolds. $F$-manifolds found an application in many areas of mathematics and mathematical physics (see \cite{h2} for overview and further references and \cite{dav1, lpr, l}). In \cite{arsie} (Theorem 4.4) Arsie and Lorenzoni have shown  that operator field $L$, defined by the identity
$$
L \xi = E \circ \xi,
$$
is, in fact, a Nijenhuis operator. Thus, establishing that $F$-manifolds are Nijenhuis manifolds in general.

In \cite{dav2} David and Hertling introduced a large class of {\it regular} $F$-manifolds, i.e., $\left(\mathsf M^n, \circ, e, E\right)$ such that a Nijenhuis operator $L:=E\circ$ is $\gl$-regular.

A triple $(\mathsf M^n, L, e)$ is called \textit{Nijenhuis manifold with a unity} if $\mathsf M^n$ is a manifold, and $(L,e)$ is a Nijenhuis operator with a unity. The next theorem is the main result of the paper.

\begin{Theorem}\label{t5}
The class of regular $F$-manifolds coincides with the class of Nijenhuis manifolds with a cyclic unity.
\end{Theorem}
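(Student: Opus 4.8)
The plan is to prove the two inclusions separately, in both directions using the dictionary $L=E\circ$ and $E=Le$, with the unit $e$ of the product serving as the unity vector field. For ``regular $F$-manifold $\Rightarrow$ Nijenhuis manifold with cyclic unity'' I would start from $(\mathsf M^n,\circ,e,E)$ and set $L\xi=E\circ\xi$. By \cite{arsie} this $L$ is Nijenhuis, and it is $\gl$-regular by the very definition of a regular $F$-manifold, so only the two properties of a cyclic unity remain. To obtain $\mathcal L_e L=\Id$ I would first record two consequences of the axioms: substituting $\zeta=\theta=e$ into \eqref{hertlingmanincond} and using $e\circ\xi=\xi$ yields $\mathcal L_e a=0$ (the standard fact that the unit field preserves the product), while evaluating \eqref{eulervectorfieldcond} on the pair $(e,\xi)$ collapses to $-[E,e]\circ\xi=\xi$, i.e. $[e,E]=e$. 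Writing $L\xi=a(E,\xi)$, these combine into
\begin{equation*}
(\mathcal L_e L)\xi=[e,E\circ\xi]-E\circ[e,\xi]=(\mathcal L_e a)(E,\xi)+[e,E]\circ\xi=e\circ\xi=\xi .
\end{equation*}
For cyclicity I would argue inside the tangent algebra $(T_{\mathsf p}\mathsf M^n,\circ)$: there $L$ is multiplication by $E$, so $L^k e=E^{\circ k}$ and $\operatorname{span}\{e,Le,\dots,L^{n-1}e\}$ is the subalgebra generated by $E$. Since $p(L)\xi=p(E)\circ\xi$ for every polynomial $p$, the minimal polynomial of the operator $L$ equals that of the element $E$, whose degree is exactly the dimension of that subalgebra; $\gl$-regularity forces this degree to be $n$, so the subalgebra fills $T_{\mathsf p}\mathsf M^n$ and $e$ is cyclic.

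For the converse I would start from $(\mathsf M^n,L,e)$ with $\mathcal L_e L=\Id$ and $e$ cyclic. The smooth frame $e,Le,\dots,L^{n-1}e$ identifies each tangent space with $\mathbb R[t]/(\chi_L)$ via $L^{i}e\mapsto t^{i}$, and I would define the product by $\xi\circ\eta:=p_\xi(L)\eta$, where $\xi=p_\xi(L)e$, together with $E:=Le$. Commutativity, associativity and the unit property (with unit $e$) are immediate from polynomial multiplication, the relation $E\circ\xi=L\xi$ realises $L=E\circ$, and $e$ cyclic makes $L$ $\gl$-regular. What is left is to check that $(\circ,e,E)$ is genuinely an $F$-manifold, i.e. that the Hertling--Manin relation \eqref{hertlingmanincond} and the Euler relation $\mathcal L_E a=a$ hold.

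This last check is the main obstacle. The cleanest route I see is to exploit that both \eqref{hertlingmanincond} and $\mathcal L_E a=a$ are tensor (hence closed) conditions, and that algebraically generic points are dense in $\mathsf M^n$. On that dense set Theorem \ref{t2} brings $(L,e)$ to the explicit companion normal form with $e=\partial_{u^1}$, where the product built from the cyclic frame and the field $E=Le$ are completely explicit, so \eqref{hertlingmanincond} and the Euler relation reduce to finite identities that can be verified by a direct computation; continuity then propagates them to all of $\mathsf M^n$, including the singular points. Two points require care: that the product defined by the cyclic frame is a single globally smooth tensor (guaranteed by smoothness of $e,Le,\dots,L^{n-1}e$ and of $\chi_L$), and that the vanishing $\mathcal N_L=0$ is exactly what makes the multiplication operators---already commuting, being polynomials in $L$---satisfy the integrability encoded in \eqref{hertlingmanincond}, with the hypothesis $\mathcal L_e L=\Id$ supplying precisely the scaling that upgrades $E=Le$ to an Euler field.
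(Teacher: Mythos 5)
Your forward direction is essentially the paper's argument: you derive $\mathcal L_e L=\operatorname{Id}$ from $[e,E]=e$ and $\mathcal L_e(\circ)=0$ exactly as the paper does (it compresses this into one line), and your minimal-polynomial argument for cyclicity of $e$ is a correct, harmless variant of the paper's trick of $\circ$-multiplying a putative relation $\sum\lambda_i L^i e=0$ by a cyclic vector $\xi$. The converse also begins as in the paper: the same multiplication ($X_i\circ X_j=X_{i+j}$ with $X_i=L^ie$, equivalently $\xi\circ\eta=p_\xi(L)\eta$) and $E=Le$, with commutativity, associativity, unity and $L=E\circ$ immediate. The divergence, and the genuine gap, is in how the Hertling--Manin identity \eqref{hertlingmanincond} and $\mathcal L_E a=a$ are verified.

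You defer this --- which is the entire content of the converse --- to ``a direct computation'' in the normal form of Theorem \ref{t2} at algebraically generic points, plus density and continuity. The scaffolding (density of generic points, tensoriality of both conditions, smoothness of $\circ$) is sound in principle, but two essential pieces are missing. First, Theorem \ref{t2} does not apply at a generic point where $L$ has several distinct eigenvalues: you would first need the Splitting Theorem \ref{t1}, and then a proof that your polynomially defined product decomposes blockwise --- e.g.\ that $\xi\circ\eta=0$ for $\xi,\eta$ tangent to different factors, which is true but requires an argument (cyclicity of each $e_i$ for $L_i$ forces $p_\xi(L_2)=0$ from $p_\xi(L_2)e_2=0$), and that \eqref{hertlingmanincond} for a direct sum follows from the factors; none of this is addressed. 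Second, the per-block check is not a ``finite identity'': it is an $n$-dependent family of identities, and executing it in coordinates amounts precisely to establishing the commutation relations $[L^ie,L^je]=(j-i)L^{i+j-1}e$. The paper proves exactly these relations invariantly (Lemma \ref{nijenhuiswithunityprop}), deriving $\mathcal L_{X_i}L=L^i$ from $\mathcal N_L=0$ in the form \eqref{nijendef2} (i.e.\ $\mathcal L_{L\xi}L=L\,\mathcal L_\xi L$) together with $\mathcal L_e L=\operatorname{Id}$; the identities \eqref{hertlingmanincond} and $\mathcal L_E a=a$ then follow by pure algebraic cancellation on the frame $X_0,\dots,X_{n-1}$, valid at every point, singular points included, with no normal forms, no density and no continuity argument. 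That lemma is the missing idea; without it (or an actually executed coordinate computation replacing it, together with the splitting reduction), your proof of the converse inclusion is incomplete.
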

\begin{Remark}
   The cyclicity of a unity guarantees that the corresponding Nijenhuis operator is $\gl$-regular.
   \end{Remark}

Using Theorem \ref{t5} and Theorem \ref{t4} one can obtain a semi-normal form for $F$-manifolds with certain algebraic constraints.

\begin{Theorem}\label{t6}
Let $\left(\mathsf M^3, e, E, \circ\right)$ be a three-dimensional $F$-manifold in the analytic category such that at a generic point, $L$ is $\gl$-regular and has two distinct eigenvalues. Then in a neighborhood of a point $\mathsf p \in \mathsf M^3$ such that $L(\mathsf p_0)$ has a single eigenvalue, there exist local coordinates $\left(x^1, x^2, x^3\right)$ in which
\begin{enumerate}
\item $e = \pd{}{x^1}.$
\item $E = (x^1 + \lambda_0) \pd{}{x^1} + \frac{1}{k} x^2 \pd{}{x^2} + f(x^2, x^3) \pd{}{x^3}.$
\item The operation $\circ$ is defined as 
$$
\begin{aligned}
    & \pd{}{x^1} \circ \pd{}{x^i} = \pd{}{x^i}, \quad i = 1, 2, 3, \\
    & \pd{}{x^2} \circ \pd{}{x^2} = \pm k(x^2)^{k - 1} \pd{}{x^2} + h(x^2, x^3) \pd{}{x^3}, \\
    & \pd{}{x^2} \circ \pd{}{x^3} = \pm k(x^2)^{k - 1} \pd{}{x^3}, \\
    & \pd{}{x^3} \circ \pd{}{x^3} = 0.
\end{aligned}  
$$
where $f$ and $h$ are analytic functions and the following PDE is fulfilled:
\begin{equation*}
    \frac{x^2}{k}\pd{h}{x^2} + f\pd{h}{x^3} -k(x^2)^{k-1}\pd{f}{x^2} - h\pd{f}{x^3} = \frac{k-2}{k}h.
\end{equation*}
Two semi-normal forms with functional parameters $f,h$ and $\Bar{f}, \Bar{h}$ respectively are  equivalent if and only if there exists a function $r(x^2,x^3), \pd{r}{x^3}(0,0) \neq 0$ such that the following identities hold:
\begin{align*}
& \Bar{h}(x^2, r(x^2,x^3))=k(x^2)^{k-1}\pd{r}{x^2}\left(x^2,x^3\right)   +  h\left(x^2, x^3\right) \frac{\partial r}{\partial x^3}\left(x^2, x^3\right), \\
& \Bar{f}(x^2, r(x^2,x^3))=\frac{x^2}{k} \frac{\partial r}{\partial x^2}\left(x^2, x^3\right)+f\left(x^2, x^3\right) \frac{\partial r}{\partial x^3}\left(x^2, x^3\right). 
\end{align*}
\end{enumerate}
\end{Theorem}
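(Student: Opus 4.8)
The plan is to derive Theorem~\ref{t6} from the correspondence of Theorem~\ref{t5} together with the normal form of Theorem~\ref{t4}. Given the $F$-manifold $(\mathsf M^3,\circ,e,E)$, the operator $L=E\circ$ is Nijenhuis (Arsie--Lorenzoni), and on the open dense set where $L$ is $\gl$-regular Theorem~\ref{t5} identifies the structure with a Nijenhuis operator carrying the cyclic unity $e$, the algebra unit. Since $L$ has two distinct eigenvalues at a generic point and a single eigenvalue at $\mathsf p_0$, the hypotheses of Theorem~\ref{t4} hold, so I would first bring $(L,e)$ into the form stated there; in those coordinates one checks directly that $\mathcal L_{\partial/\partial x^1}L=\Id$, confirming that the $F$-manifold unit is exactly the unity vector field. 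Statement~1, $e=\pd{}{x^1}$, is then immediate, and statement~2 follows from $E=Le$: because $e$ is the unit, $Le=E\circ e=E$, and $Le$ is simply the first column of $L$, namely $(x^1+\lambda_0)\pd{}{x^1}+\tfrac1k x^2\pd{}{x^2}+f\pd{}{x^3}$.

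The heart of the proof is reconstructing $\circ$ from $(L,e)$. Writing $\partial_i:=\pd{}{x^i}$, the unit axiom gives $\partial_1\circ\partial_i=\partial_i$, while $L\xi=E\circ\xi$ applied to the basis, using $E=\mu_1\partial_1+\tfrac1k x^2\partial_2+f\partial_3$ with $\mu_1=x^1+\lambda_0$ and $\mu_2=\pm(x^2)^k+x^1+\lambda_0$, yields the two relations
\[
\tfrac1k x^2\,(\partial_2\circ\partial_3)+f\,(\partial_3\circ\partial_3)=(\mu_2-\mu_1)\partial_3,\qquad \tfrac1k x^2\,(\partial_2\circ\partial_2)+f\,(\partial_2\circ\partial_3)=(\mu_2-\mu_1)\partial_2+g\,\partial_3.
\]
To close this underdetermined system I would identify $\partial_3$ with the socle of the pointwise algebra $\mathbb R[t]/\chi_L$: a direct computation gives $(L-\mu_1)(L-\mu_2)e=\tfrac1k x^2 g\,\partial_3$, so $\partial_3$ corresponds to a multiple of $(t-\mu_1)(t-\mu_2)$, and since $(t-\mu_1)^2(t-\mu_2)^2\equiv0\pmod{\chi_L}$ this forces $\partial_3\circ\partial_3=0$. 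The first relation then gives $\partial_2\circ\partial_3=\pm k(x^2)^{k-1}\partial_3$, and the second gives $\partial_2\circ\partial_2=\pm k(x^2)^{k-1}\partial_2+h\,\partial_3$ with
\[
h=\tfrac{k}{x^2}\bigl(g\mp k f(x^2)^{k-1}\bigr),\qquad\text{equivalently}\qquad g=\tfrac1k x^2 h\pm k f(x^2)^{k-1}.
\]
This is precisely the multiplication table of statement~3.

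It then remains to transport the PDE and the equivalence relation of Theorem~\ref{t4} through the change of functional parameter $g\mapsto h$ just obtained. Substituting $g=\tfrac1k x^2 h\pm k f(x^2)^{k-1}$ into the $g$-PDE and simplifying produces the stated $h$-PDE: the multiplicative factor $x^2$ lowers the inhomogeneous weight from $\tfrac{k-1}{k}$ to $\tfrac{k-2}{k}$, while the additive term $\pm k f(x^2)^{k-1}$ generates the new summand $-k(x^2)^{k-1}\pd{f}{x^2}$. For the equivalence, the admissible coordinate changes are the same $\bar x^2=x^2,\ \bar x^3=r(x^2,x^3)$ as in Theorem~\ref{t4}: the relation for $\bar f$ is unchanged, and applying the substitution to the $\bar g$-relation yields the stated $\bar h$-relation. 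Consistency of the whole package (commutativity, associativity, and the Hertling--Manin and Euler conditions) is automatic, since we started from a genuine $F$-manifold and the reconstructed $(\circ,e,E)$ agrees with the given one on the dense $\gl$-regular locus.

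The main obstacle is the behaviour at the singular point $\mathsf p_0$, where $e$ ceases to be cyclic and the reconstruction above divides by $g$ and by $x^2$. The operation $\circ$ is therefore first defined only on the $\gl$-regular locus, and one must argue that it extends analytically across $\mathsf p_0$. This extension is forced by the hypothesis that the ambient $F$-manifold is analytic: the coefficient $h=\tfrac{k}{x^2}(g\mp k f(x^2)^{k-1})$ is analytic precisely because the original $\circ$ is, which in turn constrains $g$ (e.g.\ enforcing the needed divisibility by $x^2$). Verifying this analytic continuation carefully, and checking that the continued structure is the original one rather than merely a solution of the algebraic relations, is the delicate technical step; everything else is a finite computation dictated entirely by the pointwise algebra $\mathbb R[t]/\chi_L$.
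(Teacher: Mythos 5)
Your proposal is correct and, at the top level, follows the same strategy as the paper: combine Theorem~\ref{t5} with the semi-normal form of Theorem~\ref{t4}, read off $e=\pd{}{x^1}$ and $E=Le$, determine the structure constants of $\circ$ on the locus where $e$ is cyclic, and then push the PDE and the equivalence relation of Theorem~\ref{t4} through the substitution relating $g$ and $h$. Where you genuinely differ is the derivation of the multiplication table. The paper computes $X_i=L^ie$ for $i\le 4$ explicitly and solves the three $3\times3$ linear systems arising from $X_1\circ X_1=X_2$, $X_1\circ X_2=X_3$, $X_2\circ X_2=X_4$, non-degenerate exactly where $g\neq 0$. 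You use only the defining relations $L\,\pd{}{x^i}=E\circ\pd{}{x^i}$ (two vector equations, hence underdetermined) and close the system algebraically: your identity $(L-\mu_1)(L-\mu_2)e=\tfrac{x^2g}{k}\pd{}{x^3}$ checks out, so under the isomorphism $T_{\mathsf p}\mathsf M\cong\mathbb{R}[t]/\bigl((t-\mu_1)(t-\mu_2)^2\bigr)$ (valid where $e$ is cyclic; indeed $\det(e,Le,L^2e)=g(x^2)^2/k^2$, so this is exactly $x^2g\neq0$) the field $\pd{}{x^3}$ is proportional to the socle element, and $(t-\mu_1)^2(t-\mu_2)^2=(t-\mu_1)\chi_L(t)\equiv 0$ forces $\pd{}{x^3}\circ\pd{}{x^3}=0$. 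This is more conceptual and spares the paper's bulky computation of $X_3$ and $X_4$; the paper's route, in exchange, needs no structure theory of the pointwise algebra and yields all nine structure constants by one mechanical procedure. Your closing worry is also lighter than you fear: since $\circ$ is an analytic tensor given on all of $\mathsf M^3$ by hypothesis, its structure constants are analytic, so formulas established on the dense set $x^2g\neq0$ hold everywhere by analyticity; in particular the divisibility making $h$ analytic is automatic because $h$ \emph{is} the structure constant $c^3_{22}$ of the given $\circ$. No continuation argument is needed, and the paper passes over this point in equal silence.

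One bookkeeping caveat. Re-deriving $c^3_{22}$ from $X_1\circ X_1=X_2$ confirms your formula $h=\tfrac{k}{x^2}\bigl(g\mp kf(x^2)^{k-1}\bigr)$, i.e.\ $g=\tfrac{x^2h}{k}\pm kf(x^2)^{k-1}$, as the one consistent with the stated multiplication table; the paper instead sets $h=\tfrac{k}{x^2}\bigl(kf(x^2)^{k-1}\mp g\bigr)$, which agrees with yours in the lower-sign branch but is its negative in the upper one. Consequently, carrying out the substitution you describe yields the PDE with the term $\pm k(x^2)^{k-1}\pd{f}{x^2}$ and the transformation law $\bar h(x^2,r)=h\,\pd{r}{x^3}\mp k(x^2)^{k-1}\pd{r}{x^2}$, which coincide verbatim with the statement of the theorem only in the lower-sign branch. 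This discrepancy is internal to the paper's $\pm$ bookkeeping (its displayed $c^3_{22}$ is inconsistent with its displayed $X_2$ in the upper branch), not a gap in your argument; but since you claim to recover the stated PDE as written, you should either adopt the paper's sign convention for $h$ or carry the $\pm$ explicitly through the PDE and the $\bar h$-law.
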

\section{Proof of Theorem \ref{t1}}
The factorization of the characteristic polynomial $\chi_{L(\mathsf p)}(t) = \chi_1(t)\chi_2(t)$ at a point $\mathsf p$ can be extended by continuity to a certain neighborhood $U(\mathsf p)$. Then a coordinate system $(u^1,u^2)$ given in the theorem is adopted to the decomposition of $TU = \mathcal{D}_1 \oplus \mathcal{D}_2$ into two integrable distributions $\mathcal{D}_1 := \operatorname{Ker}\chi_1(L), \mathcal{D}_2 := \operatorname{Ker}\chi_2(L)$.

The first statement of Theorem \ref{t1} is the splitting theorem for Nijenhuis operators (see \cite[Theorem 3.1]{bkm1} and also \cite[Theorems 1 and 2]{splitting}). 

We are left to prove the second and the third statements of the theorem.  As a unity vector field is non-vanishing, we may choose a coordinate system $(x^1,\dots, x^n)$ such that $e = \pd{}{x^1}$ and $L$ is as follows:
\begin{equation*}
    L = A(x^2,\dots,x^n) + x^1 \operatorname{Id}.
\end{equation*}
Then we have:
\begin{equation*}
  \chi_{L}(t) = \operatorname{det}(L - t\operatorname{Id}) = \operatorname{det}(A - (t - x^1)\operatorname{Id}) = \chi_{A}(t-x^1), 
\end{equation*}
and the factorization of the characteristic polynomial $\chi_{L} = \chi_1(t)\chi_2(t)$ can be written as:
\begin{equation*}
  \chi_{L}(t) = p_{1}(t-x^1)p_{2}(t-x^1),
\end{equation*}
where coefficients of $p_1$ and $p_2$ depend only on variables $(x^2,\dots,x^n)$ and $p_{1}(t-x^1) = \chi_1(t),p_{2}(t-x^1) = \chi_2(t)$.

We claim that the distributions $\mathcal{D}_1, \mathcal{D}_2$ are preserved by $e = \pd{}{x^1}$, i.e., if vector fields $\eta_i \in \mathcal{D}_i$ for $i=1,2$ then  $\left[\pd{}{x^1}, \eta_i \right] \in \mathcal{D}_i$.

Assume that $p_1(t) = t^k + \omega_1 t^{k-1} + \dots + \omega_k$, where $\omega_i$ are the functions depending on $(x^2,\dots, x^n)$. Then we have:
\begin{equation*}
 \chi_1(t) =  p_{1}(t-x^1) =   (t-x^1)^k + \omega_1  (t-x^1)^{k-1} + \dots + \omega_k,
\end{equation*}
and
\begin{equation*}
 \chi_1(L) =  p_{1}(L-x^1\operatorname{Id}) =   p_{1}(A)  = A^k + \omega_1  A^{k-1} + \dots + \omega_k \operatorname{Id}.
\end{equation*}
Finally
\begin{equation*}
  \chi_1(L)\left[\pd{}{x^1}, \eta_1 \right] = \mathcal{L}_{\pd{}{x^1}}(\chi_1(L) \eta_1) - \mathcal{L}_{\pd{}{x^1}}(\chi_1(L)) \eta_1.   
\end{equation*}
As $\eta_1 \in \mathcal{D}_1$, the first summand vanishes. The second summand vanishes because $\chi_1(L)$ is $x^1$ independent. The proof of $\left[\pd{}{x^1}, \eta_2 \right] \in \mathcal{D}_2$ goes in the same manner.

Thus, taking a coordinate system $(u_1,u_2)$ adopted to the decomposition of $TU = \mathcal{D}_1 \oplus \mathcal{D}_2$ we obtain that the unity vector field $e$ is decomposed into the sum $e = e_1 + e_2$, where 
    $$
    e_i = e_\alpha^i \pd{}{u^\alpha_i}, \quad \alpha = 1, \dots, m_i, i = 1, 2,
    $$
    and $e^\alpha_i$ depends only on variables $u_i$.

As a corollary, we get that the restriction of the identity $\mathcal{L}_{e}(L) = \operatorname{Id} $ to integral manifolds of $\mathcal{D}_1$ and $\mathcal{D}_2$  leads to the following identities:
\begin{align*}
  \mathcal{L}_{e_1}(L_1) &= \operatorname{Id}, \\
  \mathcal{L}_{e_2}(L_2) &= \operatorname{Id}. 
\end{align*}
Therefore, $e_1$ and $e_2$ are unity vector fields for $L_1$ and $L_2$ respectively. 
\begin{Corollary}
\label{charcoefsrelationscor}
    Let $\left(L,e\right)$ be a Nijenhuis operator with a unity on a manifold $\mathsf M^n$, and $\chi_{L}(t) =t^n +\sigma_1 t^{n-1}+ \dots + \sigma_n $ be the characteristic polynomial of $L$. Then the following differential conditions hold:
    \begin{equation}
    \label{charcoefsrelations}
        e(\sigma_k) = - (n-k+1)\sigma_{k-1}, \quad  1 \leq k \leq n, \quad \sigma_0 := 1.
    \end{equation}
    \end{Corollary}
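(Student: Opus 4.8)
The plan is to build directly on the normalization already established in the proof of Theorem~\ref{t1}. Since the unity field does not vanish, I would first pass to adapted coordinates $(x^1,\dots,x^n)$ in which $e=\pd{}{x^1}$; as shown there, the identity $\mathcal L_e L=\Id$ forces $L=A(x^2,\dots,x^n)+x^1\Id$, where the matrix $A$ is independent of $x^1$. In these coordinates the action of $e$ on any scalar function is simply $\partial_{x^1}$, so in particular $e(\sigma_k)=\partial_{x^1}\sigma_k$ for each coefficient of $\chi_L$. The determinant computation from the same proof gives $\chi_L(t)=\det(t\Id-L)=\chi_A(t-x^1)$, and this shift structure is the whole source of the relations \eqref{charcoefsrelations}.

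The key step is a single generating-function identity. Differentiating $\chi_L(t)=\chi_A(t-x^1)$ with respect to $x^1$ produces $-\chi_A'(t-x^1)$, while differentiating with respect to $t$ produces $+\chi_A'(t-x^1)$; hence
\begin{equation*}
e\bigl(\chi_L(t)\bigr)=\partial_{x^1}\chi_L(t)=-\,\partial_t\chi_L(t),
\end{equation*}
where $t$ is treated as a formal parameter and $e$ acts only on the $x$-dependent coefficients. I regard establishing this identity as the main (and essentially only) obstacle; once it is in hand, the corollary is pure bookkeeping.

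To finish, I would expand $\chi_L(t)=\sum_{k=0}^{n}\sigma_k\,t^{n-k}$ with $\sigma_0=1$ and substitute into the identity. The left-hand side is $\sum_{k=0}^{n}e(\sigma_k)\,t^{n-k}$, while the right-hand side is $-\sum_{k=0}^{n-1}(n-k)\sigma_k\,t^{n-k-1}$; reindexing the latter by $k\mapsto k-1$ and comparing the coefficients of $t^{n-k}$ for $1\le k\le n$ yields exactly $e(\sigma_k)=-(n-k+1)\sigma_{k-1}$ (the top coefficient merely reproduces the harmless identity $e(\sigma_0)=0$). As a consistency check, $k=1$ recovers $e(\sigma_1)=-n$, i.e.\ $e(\operatorname{tr}L)=n$ from \eqref{tracecond}.

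Finally, I would note a coordinate-free route that avoids choosing $e=\pd{}{x^1}$: by Jacobi's formula together with the cyclicity of the trace, $e(\det(t\Id-L))=\det(t\Id-L)\operatorname{tr}\!\bigl((t\Id-L)^{-1}\mathcal L_e(t\Id-L)\bigr)=-\chi_L(t)\,\operatorname{tr}(t\Id-L)^{-1}=-\chi_L'(t)$, using $\mathcal L_e L=\Id$ and $\operatorname{tr}(t\Id-L)^{-1}=\chi_L'(t)/\chi_L(t)$. This reproduces the same identity; its only delicate point is justifying the trace formula for a Lie derivative of a matrix function when $L$ and $\mathcal L_e L$ need not commute, which is handled by the relation $\operatorname{tr}(\mathcal L_e L^m)=m\operatorname{tr}(L^{m-1}\mathcal L_e L)$.
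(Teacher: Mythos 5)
Your proposal is correct, but it takes a genuinely different route from the paper's. The paper proves the corollary by reduction: at an algebraically generic point it invokes the Splitting Theorem \ref{t1} to treat $L$ as having a single eigenvalue $\lambda$, computes $e(\lambda)=\frac{1}{n}e(\tr L)=1$ and $\sigma_k=(-1)^k\binom{n}{k}\lambda^k$, verifies \eqref{charcoefsrelations} by direct differentiation, and then extends to singular points by continuity (implicitly using density of algebraically generic points). You instead use only the normalization from the proof of Theorem \ref{t1} --- coordinates with $e=\pd{}{x^1}$ and $L=A(x^2,\dots,x^n)+x^1\Id$ --- and package all $n$ relations into the single generating identity $e(\chi_L(t))=-\partial_t\chi_L(t)$, which is immediate from $\chi_L(t)=\chi_A(t-x^1)$ and the chain rule; your coefficient comparison and reindexing are correct, and the $k=1$ case does recover \eqref{tracecond}. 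What your argument buys: it is valid pointwise everywhere at once, so no genericity, density, or continuity step is needed, and it makes transparent why the relations behave multiplicatively under a factorization $\chi_L=\chi_1\chi_2$ (via Leibniz, $e(\chi_1\chi_2)=-(\chi_1\chi_2)'$), a recombination the paper's block-by-block reduction leaves implicit. What the paper's approach buys: it stays at the invariant level of eigenvalues, reuses the splitting machinery already proved, and exhibits \eqref{charcoefsrelations} as the infinitesimal statement $e(\lambda)=1$. Your alternative coordinate-free derivation via Jacobi's formula is also sound: since $\mathcal L_e(t\,\Id-L)=-\Id$ commutes with everything, the noncommutativity issue you flag is vacuous here, and the identity, established for $t$ outside the spectrum where $t\,\Id-L$ is invertible, extends to all $t$ because both sides are polynomial in $t$.
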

   Firstly, consider a neighborhood of an algebraically generic point $\mathsf p$, where the Segre characteristic of $L$ does not change. By means of Theorem \ref{t1} we may think of $L$ as a Nijenhuis operator with a single eigenvalue $\lambda$. Then we have:
   \begin{equation*}
       e(\lambda) = \frac{1}{n}e(\operatorname{tr}L) =\frac{1}{n}\operatorname{tr}\mathcal{L}_{e}(L) = 1. 
   \end{equation*}
   Since $\sigma_k = (-1)^k\binom{n}{k}\lambda^k$, we obtain:
   \begin{equation*}
       e(\sigma_k) = k (-1)^k\binom{n}{k}\lambda^{k-1} = - (n-k+1)\sigma_{k-1}, \quad \text{for $1 \leq k \leq n$ }
   \end{equation*}
As the conditions \eqref{charcoefsrelations} hold for any algebraically generic point $\mathsf p \in \mathsf M^n$, they hold everywhere on $\mathsf M^n$ by continuity. 
\section{Proof of Theorems \ref{t2} and \ref{t2_2}}
\subsection{Case of a real eigenvalue \texorpdfstring{$\lambda$}{l}}
By assumption, we have that $\mathcal{L}_{e}(\lambda) = 1$, thus the eigenvalue of $L$ is non-constant. 
The following theorem (see \cite{bkm1}) states the local normal form for a Nijenhuis operator.
\begin{Theorem}
\label{thmjrdn}
 Suppose that in a neighborhood of a generic point $\mathsf p \in \mathsf M^n$ a Nijenhuis operator $L$ is similar to the standard Jordan block with a real eigenvalue $\lambda$, and $d \lambda(\mathsf p) \neq 0$. Then there exists a local coordinate system $\left(u^1, \ldots, u^n\right)$ in which $L$ takes the following form:
\begin{equation}
\label{jrdn}
 L(u)=L_{\text {can }}=
 \left(\begin{array}{cccccc}
u^1 + \lambda(\mathsf p) & & & & & \\
1 & u^1 + \lambda(\mathsf p)& & & & \\
-u^3 & 1 &  & & & \\
\vdots & &  & \ddots & & \\
-(n-3)u^{n-1} & & & 1 & u^1 + \lambda(\mathsf p) & \\
-(n-2)u^n & & & & 1 & u^1 + \lambda(\mathsf p)
\end{array}\right). 
\end{equation}
\end{Theorem}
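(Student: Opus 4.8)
The plan is to construct the coordinate functions $u^1,\dots,u^n$ one at a time by dictating how their differentials must sit inside a Jordan chain for the adjoint operator $L^*$, and to use the Nijenhuis condition to guarantee that at each step the prescribed differential is \emph{closed}, hence exact.

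First I would read off the target. Pairing the differentials $du^i$ with the columns of the matrix $L_{\mathrm{can}}$, one sees that the sought coordinates must satisfy
\begin{align*}
L^* du^1 &= \lambda\, du^1, \\
L^* du^2 &= \lambda\, du^2 + du^1, \\
L^* du^i &= \lambda\, du^i + du^{i-1} - (i-2)\,u^i\, du^1, \qquad 3 \le i \le n,
\end{align*}
where $\lambda = u^1 + \lambda(\mathsf p)$. Thus $du^1,\dots,du^n$ is a Jordan chain for $L^*$ whose bottom covector is $du^1 = d\lambda$, all corrections being multiples of the eigencovector $d\lambda$. Accordingly I set $u^1 := \lambda - \lambda(\mathsf p)$, which is a legitimate coordinate because $d\lambda(\mathsf p)\neq 0$; here I invoke the standard fact that for a Nijenhuis operator with a smooth single eigenvalue one has $L^* d\lambda = \lambda\, d\lambda$, so the first line holds automatically and $d\lambda$ spans $\ker(L^*-\lambda\,\Id)$ at every point.

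Next I would construct $u^2,\dots,u^n$ inductively. Assuming $u^1,\dots,u^{i-1}$ are already found, I must produce a function $u^i$ whose (automatically closed) differential satisfies $(L^*-\lambda\,\Id)\,du^i = du^{i-1} + c_i\, d\lambda$ for a suitable function $c_i$. Since $L^*$ is pointwise a single size-$n$ Jordan block, $\ker(L^*-\lambda\,\Id)=\operatorname{span}(d\lambda)$ is one-dimensional and contained in $\operatorname{im}(L^*-\lambda\,\Id)$, so for any choice of $c_i$ there is a pointwise solution $\omega$, unique up to adding a multiple of $d\lambda$. The genuine content is to pin down $c_i$ and the residual freedom so that $\omega$ is closed. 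For this I would use the deformed differential $d_L$ defined by $d_L f = L^* df$ on functions: one always has the anticommutation $d\,d_L + d_L\, d = 0$, while $d_L^2 = 0$ is \emph{equivalent} to the vanishing of the Nijenhuis torsion. Applying $d$ to the relation already established for $du^{i-1}$ and using $d_L^2=0$ shows that the obstruction to closedness of $\omega$ lies in $d\lambda \wedge \Omega^1$, i.e.\ it is of the form $d\lambda \wedge \theta$; this is exactly the part that the gauge freedom $\omega \mapsto \omega + (\text{function})\,d\lambda$ can cancel. Carrying out the cancellation produces a closed $du^i$, and a direct computation identifies the resulting correction as $c_i = -(i-2)\,u^i$, matching the normal form.

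Finally, because $L$ is $\gl$-regular with a single Jordan block throughout the algebraically generic neighbourhood, $L^*$ restricted to $\operatorname{span}(du^1,\dots,du^n)$ is a single Jordan block with eigenvalue $\lambda$ of size $n$; the chain relations then force $du^1,\dots,du^n$ to be pointwise linearly independent, so $(u^1,\dots,u^n)$ is a local coordinate system in which, by construction, $L = L_{\mathrm{can}}$. I expect the main obstacle to be precisely the inductive step: proving the existence of a \emph{closed} solution of $(L^*-\lambda\,\Id)\,du^i = du^{i-1} + c_i\, d\lambda$ while simultaneously forcing $c_i = -(i-2)u^i$. This is where the full strength of the Nijenhuis identity (equivalently $d_L^2=0$) is unavoidable, since it is exactly what confines the integrability obstruction to the gauge-removable subspace $d\lambda\wedge\Omega^1$; an arbitrary correction term would leave a non-exact remainder. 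A less transparent alternative would be to first bring the $\gl$-regular $L$ to companion form and then resolve the degeneracy caused by $\sigma_k = \binom{n}{k}(-\lambda)^k$ being functions of $\lambda$ alone, but the Jordan-chain construction makes the emergence of the stated normal form most direct.
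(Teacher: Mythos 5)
You should first be aware that the paper does not prove Theorem \ref{thmjrdn} at all: it is imported from \cite{bkm1}, where the construction goes by induction along the flag of integrable distributions $\operatorname{Image}L_\lambda^k$ (exactly the adaptation recorded in the Remark following the theorem here), so your attempt must be measured against that proof. Your dual scheme --- a chain of exact $1$-forms for $L^*$ anchored at $d\lambda$ --- is a sensible skeleton, and several ingredients are sound: the fact $L^*d\lambda=\lambda\,d\lambda$, the pointwise solvability of the chain equation, and the independence argument (indeed $(L^*-\lambda)^{k-1}du^k=du^1$, because the correction terms $c_k\,du^1$ are killed by $C^\infty$-linearity of $L^*-\lambda$). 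But the inductive closedness step, which is the entire content of the theorem, is asserted rather than proved, and the mechanism you offer for it does not work. Applying $d$ to the relation already established for $du^{i-1}$ and using $d_L^2=0$ produces only a consistency identity among data you already have --- explicitly, $\bigl((L^*-\lambda)\,dc_{i-1}+du^{i-2}-dc_{i-2}\bigr)\wedge d\lambda=0$ --- and says nothing about $d\omega$ for the new candidate $\omega$, which at this stage is defined only pointwise-algebraically and is not yet tied to the $d_L$-calculus in any way. The compatibility of the overdetermined system $(L^*-\lambda)\,du^i+(i-2)u^i\,d\lambda=du^{i-1}$ is a genuine Frobenius-type integrability statement, and this is precisely where the torsion must do real work; your sketch contains no argument for it.

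Moreover, the claimed confinement of the obstruction to $d\lambda\wedge\Omega^1$ is false as stated, already inside the normal form \eqref{jrdn} with $n=3$: the form $\omega_0=du^3+u^3\,du^2$ solves $(L^*-\lambda)\omega_0=du^2$, yet $d\omega_0=du^3\wedge du^2$ is not divisible by $d\lambda=du^1$, so the gauge $\omega\mapsto\omega+f\,d\lambda$ alone cannot cancel it. The actual freedom is $\omega\mapsto\omega+g\,du^2+f\,du^1$ with $c_i$ shifted by $g$ (since $(L^*-\lambda)(g\,du^2)=g\,d\lambda$), and what must be proved is that $d\omega_0$ lies in the set of forms $dg\wedge du^2+df\wedge du^1$; even granting $d\omega_0=d\lambda\wedge\theta$, cancelling it requires $\theta$ to be exact along the level sets of $\lambda$, a Poincar\'e-lemma-with-parameters step you omit. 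Finally, $c_i=-(i-2)u^i$ is not ``identified by a direct computation'': closedness pins $c_i$ down only up to lower-order freedom. For $n=3$ the lower-triangular matrix with corner entry $c$ is Nijenhuis iff $\pd{c}{u^3}=-1$, i.e.\ $c=-u^3+\varphi(u^1,u^2)$, and eliminating $\varphi$ needs a secondary normalization $\tilde u^3=u^3+\psi(u^1,u^2)$ with $\pd{\psi}{u^2}+\psi=-\varphi$, an ODE in $u^2$; an analogous correction is needed at every step $i$. None of these repairs is cosmetic --- they are exactly where \cite{bkm1} spends its effort --- so as it stands your proposal is a plan, not a proof.
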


Our goal now is to prove that it is possible to bring a unity $e$ to its normal form without changing $L$.

\begin{Remark}
  Let $\lambda: U(\mathsf p) \rightarrow \mathbb{R}$ be the eigenvalue of $L$ considered as a smooth function and denote $L_\lambda:=L-\lambda \operatorname{Id}$.
 The coordinate system in Theorem \ref{thmjrdn} is adapted to a flag of integrable distributions
$$
\{0\} \subset \operatorname{Image} L_\lambda^{n-1} \subset \operatorname{Image} L_\lambda^{n-2} \subset \cdots \subset \operatorname{Image} L_\lambda \subset T \mathsf M^n,
$$ more precisely such that $\operatorname{Ker}L_{\lambda}^{n-k} = \operatorname{Image}L_{\lambda}^{k}= \operatorname{span}(\pd{}{u^{k+1}},\hdots,\pd{}{u^n})$.
    
\end{Remark}
\begin{Lemma} 
Let $\mathcal{F}_{k}$ be the foliation generated by $\operatorname{Image}L_{\lambda}^{k}$. Then any vector field e satisfying  $\mathcal{L}_{e}(L) = \operatorname{Id}$ is $\mathcal{F}_{k}$-preserving, i.e., for any vector field $\Tilde{\eta} \in \operatorname{Image}L_{\lambda}^{k}$ holds that $\left[e,\Tilde{\eta}\right] \in \operatorname{Image}L_{\lambda}^{k} $.
\end{Lemma}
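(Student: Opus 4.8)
The plan is to reduce the whole statement to the single clean fact that the shifted operator $L_\lambda := L - \lambda\operatorname{Id}$ is annihilated by the Lie derivative along $e$. Since $\mathcal{L}_e$ is a derivation on tensor fields, $\mathcal{L}_e(\lambda\operatorname{Id}) = e(\lambda)\operatorname{Id} + \lambda\,\mathcal{L}_e\operatorname{Id} = e(\lambda)\operatorname{Id}$, because $\operatorname{Id}$ has constant components so that $\mathcal{L}_e\operatorname{Id} = 0$. At the generic point under consideration $L$ carries the single real eigenvalue $\lambda = \tfrac1n\operatorname{tr}L$, and taking the trace of $\mathcal{L}_e L = \operatorname{Id}$ gives $e(\lambda) = \tfrac1n\operatorname{tr}(\operatorname{Id}) = 1$, exactly the computation of Corollary \ref{charcoefsrelationscor}. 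Hence
$$
\mathcal{L}_e L_\lambda = \mathcal{L}_e L - \mathcal{L}_e(\lambda\operatorname{Id}) = \operatorname{Id} - e(\lambda)\operatorname{Id} = 0 .
$$
Because $\mathcal{L}_e$ is a derivation for the composition of $(1,1)$-tensors, this propagates to every power: $\mathcal{L}_e(L_\lambda^k) = \sum_{j} L_\lambda^j(\mathcal{L}_e L_\lambda)L_\lambda^{k-1-j} = 0$, i.e.\ $L_\lambda^k$ commutes with $\mathcal{L}_e$ when acting on vector fields.

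Next I would represent an arbitrary section $\tilde\eta$ of $\operatorname{Image}L_\lambda^k$ as $\tilde\eta = L_\lambda^k\zeta$ for some vector field $\zeta$, and then compute directly, using the Leibniz rule for the Lie derivative of a $(1,1)$-tensor applied to a vector field,
$$
[e,\tilde\eta] = \mathcal{L}_e(L_\lambda^k\zeta) = (\mathcal{L}_e L_\lambda^k)\zeta + L_\lambda^k(\mathcal{L}_e\zeta) = L_\lambda^k[e,\zeta] \in \operatorname{Image}L_\lambda^k ,
$$
which is precisely the claimed $\mathcal{F}_k$-invariance. This mirrors the mechanism already used in the proof of Theorem \ref{t1}, where $\chi_1(L)[e,\eta_1]$ was rewritten through $\mathcal{L}_e(\chi_1(L)\eta_1)$; here $\chi_1(L)$ is replaced by $L_\lambda^k$ and the role of the ``$x^1$-independence'' of $\chi_1(L)$ is played by the identity $\mathcal{L}_e L_\lambda = 0$.

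The only nontrivial point, and the step I would treat most carefully, is the representation $\tilde\eta = L_\lambda^k\zeta$. Since $\mathsf p$ is algebraically generic with $L$ a single Jordan block, the ranks of the powers $L_\lambda^j$ are locally constant, so each $\operatorname{Image}L_\lambda^j$ is a smooth subbundle of $T\mathsf M^n$ of rank $n-j$ coinciding, by the Remark, with $\operatorname{span}(\pd{}{u^{j+1}},\dots,\pd{}{u^n})$. Then $L_\lambda^k\colon T\mathsf M^n \to \operatorname{Image}L_\lambda^k$ is a surjective morphism of vector bundles of constant rank, which therefore admits a smooth (resp.\ analytic) right inverse; applying it to $\tilde\eta$ yields the required $\zeta$. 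Concretely, one may also check on the normal form \eqref{jrdn} that $L_\lambda^k$ sends the coordinate frame onto a frame of $\operatorname{Image}L_\lambda^k$, producing the lift explicitly. With this lift established the displayed computation is immediate, so the constant-rank surjectivity of $L_\lambda^k$ is really the crux of the argument, while the vanishing $\mathcal{L}_e L_\lambda = 0$ supplies its engine.
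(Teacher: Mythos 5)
Your proposal is correct and follows essentially the same route as the paper: establish $\mathcal{L}_e L_\lambda = 0$ from $e(\lambda)=1$, propagate it to $\mathcal{L}_e(L_\lambda^k)=0$ by the Leibniz rule, and conclude $[e, L_\lambda^k\zeta] = L_\lambda^k[e,\zeta] \in \operatorname{Image}L_\lambda^k$. Your only addition is the careful constant-rank justification of the lift $\tilde\eta = L_\lambda^k\zeta$, which the paper's proof takes for granted; that is a welcome but inessential refinement, valid here since the point is algebraically generic.
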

\begin{proof}
Let $\Tilde{\eta} = L_{\lambda}^{k}\eta$. Then we have:
\begin{equation*}
\left[e, L_{\lambda}^{k}\eta\right] = \mathcal{L}_{e}(L_{\lambda}^{k})\eta + L_{\lambda}^{k}\left[e, \eta\right] = kL_{\lambda}^{k-1}\mathcal{L}_{e}(L_{\lambda})\eta + L_{\lambda}^{k}\left[e, \eta\right]. 
\end{equation*}
Note that $\mathcal{L}_{e}(L) = \operatorname{Id}$. Taking into account that  $\mathcal{L}_{e}(\lambda)= 1$ we get $\mathcal{L}_{e}(L_{\lambda}) = 0$. Finally:
\begin{equation*}
\left[e, L_{\lambda}^{k}\eta\right] =  L_{\lambda}^{k}\left[e, \eta\right].   
\end{equation*}
Therefore, $\left[e,\Tilde{\eta}\right] \in \operatorname{Image}L_{\lambda}^{k} $.
\end{proof}
We start with the coordinate system $(u^1,\hdots, u^n)$ from Theorem \ref{thmjrdn}. In this coordinate system  the vector field $e =e^{i}\pd{}{u^{i}}$ where $e^{i} = e^{i}(u^{1}, \hdots, u^{i})$. Note that the eigenvalue of $L$ is $\lambda = u^1$ and $ e^1 = \mathcal{L}_{e}(u^1)= 1$.
\vskip 5pt
Let us denote the dual operator of $L$ by $L^*$. Then for $2 \leq k \leq n$
\begin{align*}
  &du^{k} =  \mathcal{L}_{e}(L^{*})du^{k} = \mathcal{L}_{e}(L^{*}du^{k}) - L^{*}\mathcal{L}_{e}(du^{k}) = \mathcal{L}_{e}(u^{1}du^{k}+du^{k-1}) - L^{*}de^{k} = \\
  &du^{k} + u^{1}de^{k} + de^{k-1} - L^{*}de^{k}.
\end{align*}
Thus we get
\begin{equation*}
    (L^{*}-u^1\operatorname{Id})de^{k} = de^{k-1},
\end{equation*}
or, equivalently,
\begin{equation*}
    f(u)du^{1} + \sum_{j=2}^{k-1} \frac{\partial e^{k}}{\partial u^{j+1}}du^{j} = \sum_{j=1}^{k-1} \frac{\partial e^{k-1}}{\partial u^{j}}du^{j}, \quad \text{where} \quad f(u) = \frac{\partial e^{k}}{\partial u^{2}} - \sum_{j=3}^{k} (j-2)u^{j}\frac{\partial e^{k}}{\partial u^{j}}
\end{equation*}
Together with $e^{1} = 1$ we have for $2 \leq k \leq n$
\begin{align*}
    &\frac{\partial e^{k}}{\partial u^j} =  \frac{\partial e^{k-1}}{\partial u^{j-1}}  \quad \text{for} \quad 3 \leq j \leq k -1,  \\
    &\frac{\partial e^{k}}{\partial u^{2}} - \sum_{j=3}^{k} (j-2)u^{j}\frac{\partial e^{k}}{\partial u^{j}} = \frac{\partial e^{k-1}}{\partial u^{1}}, \\
    & \frac{\partial e^{k}}{\partial u^{k}} = 0.
\end{align*}
\begin{Corollary}
\label{cor1}
    If $e = \pd{}{u^{1}} + e^{n}\pd{}{u^{n}} $, then $e^{n} = e^{n}(u^1)$, i.e., $e^{n}$ depends only on the first coordinate $u^{1}$.
\end{Corollary}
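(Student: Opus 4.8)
The plan is to feed the hypothesis $e = \pd{}{u^1} + e^n \pd{}{u^n}$ --- equivalently $e^1 = 1$ and $e^2 = \dots = e^{n-1} = 0$ --- into the three families of recursion relations derived immediately above, specialised to the single index $k = n$. Since $e^{n-1} \equiv 0$, every right-hand side in those relations vanishes, and the task reduces to showing that the surviving first-order conditions force $e^n$ to be a function of $u^1$ alone.

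First I would record the coordinate dependence from the top down. The third relation $\pd{e^n}{u^n} = 0$ shows at once that $e^n = e^n(u^1, \dots, u^{n-1})$. Next, the first relation at $k = n$ reads $\pd{e^n}{u^j} = \pd{e^{n-1}}{u^{j-1}}$ for $3 \le j \le n-1$; because $e^{n-1} \equiv 0$, this yields $\pd{e^n}{u^j} = 0$ for every $3 \le j \le n-1$. Hence at this stage $e^n$ depends only on $u^1$ and $u^2$.

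It remains to eliminate the dependence on $u^2$, and this is precisely where the ordering of the steps matters. The second relation at $k = n$ is
\[
\pd{e^n}{u^2} - \sum_{j=3}^{n} (j-2)\, u^j \pd{e^n}{u^j} = \pd{e^{n-1}}{u^1}.
\]
Its right-hand side vanishes since $e^{n-1} \equiv 0$, and every term of the sum vanishes by the preceding step, as I have already established $\pd{e^n}{u^j} = 0$ for all $3 \le j \le n$ (the cases $3 \le j \le n-1$ from the first relation, the case $j = n$ from the third). What is left is $\pd{e^n}{u^2} = 0$, so $e^n = e^n(u^1)$, as claimed.

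The only point to watch is that the vanishing of the sum in the second relation is not automatic: it relies on having first secured independence from $u^3, \dots, u^{n-1}$ and $u^n$, so that the $u^2$-term is isolated. Carrying out the steps in the reverse order would leave $\pd{e^n}{u^2}$ entangled with the higher derivatives. Beyond this bookkeeping there is no genuine analytic obstacle --- the conclusion is a purely algebraic consequence of the linear recursion, obtained by a short three-step specialisation to $k = n$.
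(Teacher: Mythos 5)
Your proposal is correct and coincides with the paper's intended argument: the corollary is stated there as an immediate consequence of the three displayed relations, and your specialisation to $k=n$ with $e^2=\dots=e^{n-1}\equiv 0$ --- third relation first, then the first relation, then the second relation with the sum already annihilated --- is exactly that computation, including the correct observation that the $u^2$-derivative can only be isolated after the higher derivatives are shown to vanish. (In the degenerate case $n=2$ one has $e^{n-1}=e^1=1$ rather than $0$, but $\pd{e^1}{u^1}=0$ still makes the right-hand side of the second relation vanish, so the conclusion is unaffected.)
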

Let $L_k$ denote the $k \times k$ submatrix of $L$ composed by $L_{j}^{i}$ with $1 \leq i,j \leq k$ and $e_{k} =  \sum_{j=1}^{k} e^{j} \pd{}{u^{j}}$. We start with the pair $(L_1, e_1)$, where submatrix $L_1=(u^1)$ and $e_1 = \pd{}{u^{1}}$. The pair $(L_1, e_1)$ already has a canonical form, so we set $y^1=u^1$. Then we reduce $(L_2, e_2)$ to the 2-dimensional canonical form $(L, \pd{}{y^{1}})$ by changing only one coordinate $u^2 \mapsto y^2$ and leaving all the others unchanged. And so on, assuming that $(L_k, e_k) =(L_k, \pd{}{y^{1}}) $ (which means that the first canonical coordinates $y^1, \ldots, y^k$ have been already constructed), we reduce $(L_{k+1}, e_{k+1})$ to the canonical form $(L_{k+1}, \pd{}{y^{1}})$ by finding the next canonical coordinate $y^{k+1}$ in terms of $y^1, \ldots, y^k$ and $u^{k+1}$. The process finishes in $n-1$ steps. To simplify the notations we prove the induction step of this procedure in case $k=n-1$.

\begin{Lemma}
 Suppose that in a coordinate system $(y^1, \hdots, y^{n-1},u^n)$ the Nijenhuis operator $L$ is in its canonical form as in Theorem \ref{thmjrdn} and $e=\pd{}{y^1} + e^{n}(y^1)\pd{}{u^n}$. Then we may change the last coordinate $u^n \mapsto y^n$ in such a way that $L$ remains in the canonical form and $e = \pd{}{y^1}$.
\end{Lemma}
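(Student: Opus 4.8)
The plan is to produce the new coordinate as a change that alters only the last coordinate, $y^n=\phi(y^1,\dots,y^{n-1},u^n)$, and to pin it down by imposing two requirements at once: that $L$ stay in the canonical form of Theorem~\ref{thmjrdn}, and that the residual component of $e$ along $\partial_{u^n}$ be absorbed so that $e=\partial_{y^1}$. First I would observe that preserving the unit subdiagonal forces $\partial\phi/\partial u^n=1$, so the change is necessarily of shift type, $y^n=u^n+\psi$ with $\psi=\psi(y^1,\dots,y^{n-1})$; this reduces the whole problem to determining the single function $\psi$.

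Next I would translate each requirement into conditions on $\psi$. Since the first $n-1$ rows of $L_{\mathrm{can}}$ involve only the coordinates $u^1,u^3,\dots,u^{n-1}$ and not $u^n$, the transformation law for the $(1,1)$-tensor $L$ shows that these rows are left untouched; only the last row is affected. Writing out how the last row transforms under $y^n=u^n+\psi$ and demanding that it again have the canonical shape yields a short system of first-order linear conditions on $\psi$: independence of $\psi$ from $u^3,\dots,u^{n-1}$ together with one relation coming from the $(n,1)$-entry. The straightening requirement $e=\partial_{y^1}$ is equivalent to $e(y^n)=0$, which, because $e=\partial_{y^1}+e^n\partial_{u^n}$, is the first-order equation $\partial\psi/\partial y^1=-e^n$.

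To solve I would invoke Corollary~\ref{cor1}, by which $e^n$ depends only on $y^1$; integrating $\partial\psi/\partial y^1=-e^n$ along $y^1$ then produces an explicit candidate $\psi$, which I would check against the canonical-form conditions from the previous step before verifying by direct substitution that the transformed operator is again in the form of Theorem~\ref{thmjrdn} and that $e=\partial_{y^1}$. Uniqueness of $y^n$ follows because the combined conditions determine $\psi$ up to an additive constant, fixed by the normalization of the coordinate.

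The step I expect to be the main obstacle is the compatibility of the two families of conditions on $\psi$ — the canonical-form conditions coming from the transformation of the last row of $L$ and the straightening condition $\partial\psi/\partial y^1=-e^n$. Their consistency is not formal; it has to be extracted from the unity equation $\mathcal{L}_eL=\Id$ itself, through the constraints this equation imposes on $e^n$ (the very constraints underlying the system that precedes Corollary~\ref{cor1}). Establishing that these constraints make the over-determined system for $\psi$ integrable — equivalently, that the $\partial_{u^n}$-component of $e$ is compatible with the rigidity of the canonical form — is the crux of the argument.
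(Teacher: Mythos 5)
Your proposal is essentially the paper's own proof: the paper makes exactly your shift-type change, $y^n := u^n - \int_0^{y^1} e^n(t)\,dt$ (so $\partial\psi/\partial y^1 = -e^n$ with $\psi$ depending only on $y^1$, legitimate by Corollary~\ref{cor1}), observes $e(y^n) = e^n - e^n = 0$, and then checks by direct substitution that $L^{*}dy^{n} = -(n-2)y^{n}dy^{1} + dy^{n-1} + y^{1}dy^{n}$, i.e., that the canonical form of Theorem~\ref{thmjrdn} survives. The compatibility issue you single out as the crux is correctly located: in that verification the $(n,1)$-entry of the canonical form contributes a leftover term $-(n-2)\bigl(\int_0^{y^1}e^n(t)\,dt\bigr)dy^1$, which the paper's one-line computation does not display and which is disposed of only by the constraints the unity equation $\mathcal{L}_e L = \Id$ places on $e^n$ (the $du^1$-component of the system preceding Corollary~\ref{cor1}) --- precisely the source of consistency your outline anticipates.
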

\begin{proof}
 Let $y^n := u^n - \int_{0}^{y^1} e^{n}(t) dt $. Then we obtain 
 \begin{equation*}
     e(y^n) = e(u^n) - e\left(\int_{0}^{y^1} e^{n}(t) dt\right) = e^{n} - e^{n} = 0.  
 \end{equation*}
Thus in a new coordinate system $(y^1, \hdots, y^{n})$ the vector field $e=\pd{}{y^1}$. It remains to show that the canonical form of $L$ is preserved, i.e., the following relation is satisfied:
 \begin{equation*}
     L^{*}dy^{n} = -(n-2)y^{n}dy^{1} + dy^{n-1} + y^{1}dy^{n}.
 \end{equation*}
The following straightforward computation finishes the proof:
\begin{equation*}
    L^{*}dy^{n} = L^{*}du^{n} - L^{*}d\left(\int_{0}^{y^1} e^{n}(t) dt\right) = L^{*}du^{n} - y^{1}e^{n}(y^{1})dy^{1}, 
\end{equation*}
\begin{equation*}
dy^{n} = du^{n} - d\left(\int_{0}^{y^1} e^{n}(t) dt\right) = du^{n} - e^{n}(y^{1})dy^{1},
\end{equation*}
And, finally,
\begin{equation*}
 L^{*}dy^{n} = -(n-2)y^{n}dy^{1} + dy^{n-1} + y^{1}dy^{n}.   
\end{equation*}
\end{proof}
\begin{Corollary}
    \label{jordanblockframe}
Suppose that in a neighborhood of a generic point $\mathsf p \in \mathsf M^n$ a Nijenhuis operator $L$ with a unity $e$ is similar to the standard Jordan block with a non-constant real eigenvalue $\lambda$. Then vector fields $\left\{X_0, X_1, \dots, X_{n-1}\right\}$, where $X_k:=L^{k}e$, form a frame in a neighbourhood of $\mathsf p$. 
\end{Corollary}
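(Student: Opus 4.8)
The plan is to prove the slightly stronger statement that $e$ is a \emph{cyclic} vector for $L$ at every point of the neighbourhood, not merely at $\mathsf p$; indeed, linear independence of $X_0,\dots,X_{n-1}$ at a point is by definition cyclicity of $e$ there. Throughout the neighbourhood $L$ is similar to a single Jordan block, so at each point $L_\lambda := L - \lambda\,\Id$ is a single nilpotent Jordan block of rank $n-1$. For such an operator the lattice of invariant subspaces is a chain, and the unique invariant hyperplane is $\operatorname{Image}L_\lambda = \operatorname{Ker}L_\lambda^{\,n-1}$; a vector is cyclic precisely when it does not lie in this hyperplane. Hence the whole problem reduces to checking that $e \notin \operatorname{Image}L_\lambda$ pointwise.

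I would settle this with two observations. First, $d\lambda(e) = \mathcal{L}_e\lambda = 1$, which was already computed in the proof of Corollary \ref{charcoefsrelationscor} (with $\lambda = \tfrac1n\operatorname{tr}L$ a genuine smooth function, since the eigenvalue is non-constant). Second, $d\lambda$ is a left eigen-covector of $L$, i.e.\ $L^*d\lambda = \lambda\,d\lambda$, equivalently $\operatorname{Image}L_\lambda \subseteq \operatorname{Ker}d\lambda$. In the canonical coordinates of Theorem \ref{thmjrdn} this is immediate: there $\lambda = u^1 + \lambda(\mathsf p)$, so $d\lambda = du^1$, and because $L$ is lower triangular its first row is $(\lambda,0,\dots,0)$, giving $L^*du^1 = \lambda\,du^1$. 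Combining the two, $e \notin \operatorname{Ker}d\lambda \supseteq \operatorname{Image}L_\lambda$, so $e$ avoids the invariant hyperplane at every point and is therefore cyclic there; this yields the frame $\{X_0,\dots,X_{n-1}\}$.

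As a concrete double-check I would simply compute in the coordinates where $e = \pd{}{u^1}$ and $L$ is canonical. Writing $L = \lambda\,\Id + L_\lambda$, the families $\{L^k e\}_{k=0}^{n-1}$ and $\{L_\lambda^k e\}_{k=0}^{n-1}$ span the same space, the change of basis between them being unipotent triangular. Since $L_\lambda \pd{}{u^i} = \pd{}{u^{i+1}}$ for $i \ge 2$ while $L_\lambda\pd{}{u^1} = \pd{}{u^2} + \sum_{j\ge 3} c_j\,\pd{}{u^j}$ for certain functions $c_j$, an induction shows that for $k \ge 1$ the vector $L_\lambda^k e$ has leading term $\pd{}{u^{k+1}}$ with coefficient $1$. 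Thus the matrix whose columns are $e, L_\lambda e, \dots, L_\lambda^{n-1}e$ is lower triangular with unit diagonal, so its determinant is identically $1$ and the vector fields are everywhere independent.

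A structural remark streamlines either route: from $\mathcal{L}_e L = \Id$ one gets $\mathcal{L}_e(L^k) = k L^{k-1}$, hence $[e, X_k] = k X_{k-1}$ (with $X_{-1}=0$). Feeding this into $\mathcal{L}_e(X_0\wedge\dots\wedge X_{n-1})$ kills every term, since for $k\ge 1$ the bracket $kX_{k-1}$ is parallel to the factor $X_{k-1}$ already present and for $k=0$ it vanishes; so the Wronskian $n$-vector is $e$-invariant, and it suffices to test non-degeneracy on a single slice transverse to the flow of $e$. The only point genuinely requiring care is the eigen-covector identity $L^*d\lambda = \lambda\,d\lambda$, which I expect to be the main obstacle if one demands a coordinate-free argument; but it is forced at once by the lower-triangular canonical form already established, so in the present setup it costs nothing.
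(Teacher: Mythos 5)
Your proposal is correct, and your main argument is genuinely different from the paper's. The paper obtains this Corollary as an immediate byproduct of the normal form it has just established: after the straightening Lemma one has coordinates in which $e = \pd{}{u^1}$ and $L$ is in the canonical form \eqref{jrdn}, whereupon the unipotent-triangular computation with $L_\lambda^k e$ (leading term $\pd{}{u^{k+1}}$ with unit coefficient, binomial change of basis between $\{L^k e\}$ and $\{L_\lambda^k e\}$) gives the frame --- this is precisely your ``concrete double-check'' paragraph, so that part of your write-up reproduces the paper's implicit proof. Your primary route, by contrast, is pointwise linear algebra: for a single Jordan block, cyclicity of $e$ is equivalent to $e \notin \operatorname{Image} L_\lambda$, and you verify this from $d\lambda(e)=1$ together with the eigen-covector identity $L^{*}d\lambda = \lambda\, d\lambda$. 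Note that this identity is exactly the paper's invariant equation \eqref{inveq}, valid for any Nijenhuis operator with a smooth eigenvalue; citing it directly would free your argument from any appeal to Theorem \ref{thmjrdn}, making the proof fully coordinate-free and independent of the straightening of $e$ (which is the harder content of Theorem \ref{t2}). What each approach buys: the paper's version costs nothing once the normal form is in hand, while yours isolates the genuinely relevant facts ($e(\lambda)=1$ and \eqref{inveq}), localizes cyclicity at every point rather than just near the normalized picture, and would survive in settings where one does not, or cannot, first normalize the pair $(L,e)$. Your closing remark that $[e,X_k]=kX_{k-1}$ makes the Wronskian $n$-vector $e$-invariant is correct (it is formula \eqref{comrel2} of Lemma \ref{nijenhuiswithunityprop} in disguise) but, as you say, it only reduces the problem to a transverse slice and is not needed for either complete route.
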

It is left to prove the uniqueness of the coordinate system.

Any automorphism $\psi: U(p) \rightarrow U(p)$ preserving $\left(L, e\right)$ has to preserve any vector field $X$, i.e.,
\begin{equation*}
    \psi_{*} X = X.
\end{equation*}
Equivalently,
\begin{equation}
    \psi \circ \phi_{X}^{t} = \phi_{X}^{t} \circ \psi, \quad \text{where $\phi_{X}$ is the flow generated by $X$.}
\end{equation}
Hence, $\psi = \operatorname{id}$.
\subsection{Case of pair complex conjugate eigenvalues \texorpdfstring{$\mu, \Bar{\mu}$}{m,m}}
In the case of complex conjugate eigenvalues, we may introduce a canonical complex structure $J$ (see \cite{bkm1}, section 3.3), with respect to which $L$ is a complex Nijenhuis operator. Then we bring $(L,e)$ to its canonical form in the same manner as we did in the real case. 

\section{Proof of Theorem \ref{t3}}
\begin{Remark} \label{r1}
If a pair $\left(L, e\right)$ is a Nijenhuis operator with a unity on a manifold $\mathsf M$, then $(\Tilde{L}, e)$ is also a Nijenhuis operator with a unity, where $\Tilde{L}: = L - c \cdot\operatorname{Id}$ and $c$ is a constant.
\end{Remark}
We may assume by Remark \ref{r1} that $\operatorname{tr} L(\mathsf p) = 0$. In two-dimensional case the characteristic polynomial $\chi_{L}(t)$ is 
\begin{equation*}
    \chi_{L}(t) = t^2 -\operatorname{tr} L + \operatorname{det} L.
\end{equation*}
By Corollary \ref{charcoefsrelationscor} we get the following differential conditions on $\operatorname{tr} L$ and $\operatorname{det} L$ :
\begin{align*}
    e(\operatorname{tr} L) &= 2,   &   e(\operatorname{det} L) = \operatorname{tr} L. 
\end{align*}
Thus we may find a local coordinate system $\left(x,y\right)$ such that $e = \pd{}{x}$ and $\operatorname{tr} L = 2x, \operatorname{det} L = x^2 + f(y)$, where $f(y)$ is an analytic function. We distinguish the following cases:
\begin{enumerate}
    \item Let $df \neq 0$ at $\mathsf 0$. In a new coordinate system $x_{\text{new}} = x, y_{\text{new}} = f(y) - d$ we get
    \begin{align*}
         \operatorname{tr} L &= 2x_{\text{new}},   &   \operatorname{det} L &= x^2_{\text{new}} + y_{\text{new}} + d.
    \end{align*}
    Now suppose that $L$ is as follows:
    \begin{equation*}
        L = \left(\begin{array}{cc}
           l^1_1  & l^1_2 \\
           l^2_1  & l^2_2
        \end{array}\right), \quad \text{where} \quad l^1_1 = x - a(y),\, l^2_2 = x + a(y),\, l^1_2 = l^1_2(y),\, l^2_1 = l^2_1(y).
    \end{equation*}
    A 2-dimensional operator field is Nijenhuis if and only if the following invariant equality is satisfied (see \cite{k}):
    \begin{equation}
    \label{nijenhuis2dim}
        L^{*} \dd \operatorname{det} L = \operatorname{det} L \cdot \dd \operatorname{tr} L
    \end{equation}
    this yields
    \begin{align*}
    2x l^1_2 = -x - a (y) \quad &\Longrightarrow \quad    a(y) \equiv 0, l^1_2 = -\frac{1}{2}, \\
    2x (x- a(y)) + l^2_1 = 2(x^2 +y +d) \quad &\Longrightarrow \quad l^2_1 = 2(y+d).
    \end{align*}
    \item Let $f(y) = \pm y^kg(y) + d$, where $k \in \mathbb{Z}_{\geq 2},\, g(0) > 0,\, d \in \mathbb{R}$. In a new coordinate system $x_{\text{new}} = x,\, y_{\text{new}} = yg(y)^\frac{1}{k}$ we have
    \begin{align*}
         \operatorname{tr} L &= 2x_{\text{new}},   &   \operatorname{det} L &= x^2_{\text{new}} \pm y^k_{new} + d.
    \end{align*}
    Using the notations and  \eqref{nijenhuis2dim} as in previous case, we obtain:
    \begin{align*}
    2x l^1_2 = \mp k y^{k-1} (x + a(y)) \quad &\Longrightarrow \quad    a(y) \equiv 0,\, l^1_2 = \mp \frac{k}{2} y^{k-1}, \\
    2x (x- a(y)) \pm k y^{k-1} l^2_1 = 2(x^2 \pm y^k +d) \quad &\Longrightarrow \quad d = 0,\, l^2_1 = \frac{2}{k} y.
    \end{align*}
    \item Let $f(y) \equiv d$, where $d \in \mathbb{R}$. Finally, \eqref{nijenhuis2dim} provides the following:
    \begin{align*}
    2x l^1_2 = 0 \quad &\Longrightarrow \quad     l^1_2 = 0, \\
    2x (x- a(y))  = 2(x^2 +d) \quad &\Longrightarrow \quad d = 0, a(y) \equiv 0.
    \end{align*}
\end{enumerate}
The obtained formulas in every considered case finish the proof.

\section{Proof of Theorem \ref{t4} and Corollaries \ref{c1}, \ref{c2}}

    By Remark \ref{r1} we may assume that $\operatorname{tr}(L)(\mathsf p) = 0$ at $\mathsf p \in \mathsf M^3$. Let $\chi_{L}(t)$ be the characteristic polynomial of $L$ defined as in Corollary \ref{charcoefsrelationscor}. We have:
    \begin{equation*}
        e(\sigma_{1}) = -3,  \qquad e(\sigma_{2}) = - 2\sigma_{1},  \qquad e(\sigma_{3}) = -\sigma_{2}. 
    \end{equation*}
Therefore, we may introduce a local coordinate system $(x^1,x^2,x^3)$ such that
\begin{align*}
&e = \pd{}{x^1}, \quad \sigma_1 = -3x^1, \quad \sigma_2 = 3(x^1)^2 + f(x^2,x^3), \quad \sigma_3 = -(x^{1})^3 - x^1 f(x^2,x^3) + g(x^2,x^3), 
\end{align*}
where $f,g$ are some functions.

The characteristic polynomial $\chi_{L}(t)$ is as follows:
\begin{equation*}
 \chi_{L}(t)=t^3 -3 x^{1} t^{2}+(3(x^{1})^{2} + f(x^2,x^3)) t -(x^{1})^{3} - x^1 f(x^2,x^3) + g(x^2,x^3).
\end{equation*}
Since $L(\mathsf p)$ has a multiple eigenvalue at every point $\mathsf p \in \mathsf M^3$, the discriminant $D$ of $\chi_{L}(t)$ is equal to zero, namely:
\begin{equation*}
    D = -4f^3 - 27g^2 = 0 \quad \iff f = - \frac{3}{4^{\frac{1}{3}}}g^{\frac{2}{3}}.
\end{equation*}
\begin{Lemma}
\label{3dimfirstlemma}
    Let $f, g$ be analytic functions of two variables such that $f = - g^{\frac{2}{3}}$. Then there exists an analytic function $h$ satisfying $ h^2 = -f, h^3 = g$.
\end{Lemma}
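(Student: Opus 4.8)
The plan is to turn the irrational hypothesis into a polynomial identity, write down $h$ explicitly away from the singular locus, and then extend it across that locus by a unique-factorisation argument. Cubing $f=-g^{2/3}$ gives $f^3=-g^2$, i.e. $g^2=(-f)^3$; in particular $f$ and $g$ vanish on exactly the same set $Z$. Any solution must satisfy $h=h^3/h^2=g/(-f)$, so, guided by this, I would define on the open set where $g\neq 0$ (equivalently $f\neq 0$) the analytic function $h:=-g/f$. Using $g^2=(-f)^3$ one checks immediately that $h^2=g^2/f^2=-f$ and $h^3=h\cdot h^2=(-g/f)(-f)=g$, so $h$ already solves the problem off $Z$. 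Everything then reduces to showing that this $h$ extends analytically across $Z$.

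For the extension I would pass to the local ring $\mathcal{O}$ of analytic germs at the point in question, which is a UFD (a regular local ring). Writing $g=u\prod_i p_i^{a_i}$ with $u$ a unit and $p_i$ distinct irreducibles, the identity $g^2=(-f)^3$ forces $-f$ to involve only the same irreducibles $p_i$, with $p_i$-exponent $b_i$ satisfying $2a_i=3b_i$; since $\gcd(2,3)=1$ this gives $3\mid a_i$. Setting $a_i=3c_i$ I obtain $b_i=2c_i$, so $-f=v\prod_i p_i^{2c_i}$ with a unit $v$, and comparing the unit parts yields $u^2=v^3$. Then $h:=(u/v)\prod_i p_i^{c_i}$ is a genuine analytic germ, and a direct check using $u^2=v^3$ gives $(u/v)^2=v$ and $(u/v)^3=u$, hence $h^2=-f$ and $h^3=g$. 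Off $Z$ this $h$ coincides with $-g/f$, so it is the desired analytic extension; the degenerate case $g\equiv 0$ (which forces $f\equiv 0$) is handled by $h=0$, and reality in the real-analytic category is automatic since $u$ and $v$ are real units.

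The main obstacle is precisely the behaviour along $Z$: the naive recipe $h=-g/f$ is defined only where $f\neq 0$, and extracting "$\sqrt{-f}\,$" or "$\sqrt[3]{g}\,$" directly is not analytic. The content of the lemma is that the compatibility $g^2=(-f)^3$ — the cuspidal-cubic relation $Y^2=X^3$ — exactly cancels this obstruction, and the clean way to see it is unique factorisation in $\mathcal{O}$ together with the coprimality of $2$ and $3$, which forces every vanishing order of $g$ to be divisible by $3$. The only further care I expect to need is to choose the finitely many factors $p_i$ and the units $u,v$ as honest analytic functions on a common neighbourhood on which $v$ is nonvanishing, so that $h$ is analytic there rather than merely a formal germ.
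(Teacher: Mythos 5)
Your proof is correct, but at the decisive step it takes a genuinely different route from the paper's. The paper also starts from the rational expression $h=-g/f$ (it writes $h:=g^{1/3}=g/g^{2/3}=-g/f$), but then disposes of the extension across the common zero locus in one line: $h$ is meromorphic while $h^2=-f$ is analytic, hence $h$ is analytic. The fact silently invoked there is normality of the local ring of analytic germs: a meromorphic germ satisfying a monic polynomial equation with analytic coefficients (here $h^2+f=0$) is itself analytic, because $\mathbb{R}\{x,y\}$ (or $\mathbb{C}\{x,y\}$) is integrally closed in its fraction field. You replace this appeal to integral closure by a hands-on unique-factorization computation: cube to $g^2=(-f)^3$, match irreducible exponents $2a_i=3b_i$, conclude $3\mid a_i$ from $\gcd(2,3)=1$, and build $h=(u/v)\prod_i p_i^{c_i}$ using the unit relation $u^2=v^3$; the verifications $(u/v)^2=v$ and $(u/v)^3=u$ are exactly right. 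Since every UFD is integrally closed, your argument is in effect a self-contained proof of precisely the instance of normality the paper uses; what it buys is elementarity (only Weierstrass preparation and UFD-ness of the convergent power series ring, no integral-closure theory), at the cost of length, and it delivers $h^3=g$ directly from the factorization rather than through the quotient formula. Two small remarks: your closing worry about choosing the $p_i,u,v$ on a common neighbourhood is vacuous here, since the lemma is applied locally and a finite collection of germs is always represented on a common neighbourhood, on which the germ identities $h^2=-f$, $h^3=g$ then hold; and the exponent matching between the factorizations of $g$ and $-f$ should be read up to associates, which is the standard convention and harmless.
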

\begin{proof}
Let a function $h$ be defined as $h: = g^{\frac{1}{3}}$. Then we have
\begin{equation*}
    h = \frac{g}{g^{\frac{2}{3}}} = - \frac{g}{f}. 
\end{equation*}
Thus $h$ is meromorphic. But at the same time $h^2 = -f$, so $h$ is analytic.
\end{proof}
By Lemma \ref{3dimfirstlemma} we get:
\begin{align*}
    &f(x^2,x^3) = - h^{2}(x^2,x^3), \\
    &g(x^2,x^3) = \frac{2\sqrt{3}}{9}h^{3}(x^2,x^3).
\end{align*}
Using these equalities we obtain:
\begin{align*}
    &\chi_{L}(t) = t^3 -3 x^{1} t^{2}+(3(x^{1})^{2} - h^{2}) t -(x^{1})^{3} + x^1 h^{2} + \frac{2\sqrt{3}}{9}h^{3} = \\
    & (t - (x^1 + \frac{\sqrt{3}}{3}h))^2(t - (x^1 - \frac{2\sqrt{3}}{3}h)),\\
    & \lambda_{1} = x^1-\frac{2\sqrt{3}}{3}h, \quad \text{where $\lambda_{1}$ is eigenfunction of $L$ of multiplicity 1},\\
    & \lambda_{2} = x^{1}+\frac{\sqrt{3}}{3}h, \quad \text{where $\lambda_{2}$ is eigenfunction of $L$ of multiplicity 2.}
\end{align*}
Now we take a new coordinate system $(x^1,x^2,x^3)$ in which $\lambda_{1} = x^1, e = \pd{}{x^1}$. Let $L$ be of the following form:
\begin{equation*}
  L = \left(\begin{array}{ccc}
      x^1 & 0 & 0 \\
       0 & x^1 & 0 \\
        0 & 0 & x^1
  \end{array}\right) + \left(\begin{array}{ccc}
      l^{1}_{1} &  l^{1}_{2} &  l^{1}_{3} \\
        l^{2}_{1} &  l^{2}_{2} &  l^{2}_{3} \\
         l^{3}_{1} &  l^{3}_{2} &  l^{3}_{3}
  \end{array}\right), \quad \text{where $ l^{i}_{j}= l^{i}_{j}(x^2,x^3)$}.
\end{equation*}
Since $L$ is a Nijenhuis operator, it satisfies the following invariant equation:
\begin{equation}
\label{inveq}
    (L - \lambda\operatorname{Id})^{*}\operatorname{d}\lambda = 0, \quad \text{where $\lambda$ is an eigenfunction of $L$.}
\end{equation}
In the coordinate system $(x^1,x^2,x^3)$ it implies that $l^{1}_{1} = 0,\, l^{1}_{2} = 0,\, l^{1}_{3} = 0$. Thus we have
\begin{equation*}
L = \left(\begin{array}{ccc}
      x^1 & 0 & 0 \\
       0 & x^1 & 0 \\
        0 & 0 & x^1
  \end{array}\right) + \left(\begin{array}{ccc}
      0 & 0 & 0 \\
       l_{21} & l_{22} & l_{23} \\
        l_{31} & l_{32} & l_{33}
  \end{array}\right).
\end{equation*}
Note that since $\mathcal{F}:=\{x^{1} = const\}$ is an $L$-invariant foliation, a restriction of $L$ to the leaf $\{x^1 = 0\}$, i.e., $L' := L|_{\{x^1=0\}} = \left(\begin{array}{cc}
   l_{22}  & l_{23} \\
    l_{32} & l_{33}
\end{array} \right)$, is a Nijenhuis operator with an eigenfunction $\lambda' = \lambda'(x_2,x_3)$ of multiplicity 2.
\begin{Lemma}
    \label{3dimsecondlemma}
    Let $L$ be a Nijenhuis operator on a 2-dimensional manifold $\mathsf M^2$. Additionally, assume that $L(\mathsf p)$ has a multiple eigenvalue $\lambda$ at every point $\mathsf p \in \mathsf M^2$. Then in a neighbourhood of a point $\mathsf p_0 \in \mathsf M^2$ there exists a local coordinate system $(x,y)$ such that $L$ takes the following form:
    \begin{equation*}
    L =
     \left(\begin{array}{cc}
      \pm x^k & 0  \\
       g(x,y) & \pm x^k
  \end{array}\right).   
    \end{equation*}
\end{Lemma}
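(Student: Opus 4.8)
The plan is to split $L$ into its (scalar) eigenvalue part and its nilpotent part, and then to drive everything through the two–dimensional Nijenhuis criterion \eqref{nijenhuis2dim}. Since $L$ carries a double eigenvalue at every point, $\lambda=\frac{1}{2}\operatorname{tr}L$ is an analytic function equal to that eigenvalue; as $L-c\operatorname{Id}$ is Nijenhuis whenever $L$ is (cf.\ Remark \ref{r1}), I may subtract a constant and assume $\lambda(\mathsf p_0)=0$. I will also assume $\lambda$ non–constant, which is the case needed below (if $\lambda$ is constant then $L$ is everywhere nilpotent and the stated form cannot occur). Set $N:=L-\lambda\operatorname{Id}$. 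The characteristic polynomial of $L$ equals $(t-\lambda)^2$, so $\operatorname{tr}N=0$ and $\det N=0$, and Cayley--Hamilton gives $N^2=0$; in particular $\operatorname{rank}N\le 1$. If $N\equiv 0$ the operator is scalar and the claim follows from the last step with $g\equiv 0$, so assume $N\not\equiv 0$.

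First I would extract the governing scalar identity. Substituting $\det L=\lambda^2$ and $\operatorname{tr}L=2\lambda$ into \eqref{nijenhuis2dim} gives $2\lambda\,(L-\lambda\operatorname{Id})^{*}\dd\lambda=0$, whence, after dividing by $\lambda$ where $\lambda\neq 0$ and extending by continuity,
\begin{equation*}
 N^{*}\dd\lambda=0 .
\end{equation*}
This says that $\dd\lambda$ annihilates $\operatorname{Im}N$. Since $N^2=0$ and $\operatorname{rank}N=1$ on $\{N\neq 0\}$, there $\operatorname{Im}N=\operatorname{Ker}N=:\mathcal D$ is a line field, and the identity means exactly that $\lambda$ is constant along $\mathcal D$; thus $\mathcal D$ is tangent to the level sets of $\lambda$ wherever both make sense.

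The heart of the argument is to upgrade $\mathcal D$ to a regular analytic foliation in a whole neighbourhood of $\mathsf p_0$ and to straighten it. Once $\mathcal D$ is known to extend to a nowhere–vanishing analytic line field, the flow–box theorem furnishes coordinates $(x,y)$ with $\mathcal D=\operatorname{span}\!\bigl(\pd{}{y}\bigr)$; as $\lambda$ is a first integral of $\mathcal D$, it then depends on $x$ alone, $\lambda=\lambda(x)$. I expect this to be the main obstacle: at $\mathsf p_0$ the tensor $N$ may vanish, so a priori the eigendirection $\operatorname{Im}N$ need not extend across $\mathsf p_0$. In the analytic category the way to control this is to factor $N=\rho\,N_0$, where $\rho$ is the greatest common divisor of the entries of $N$ in the local analytic ring and $N_0$ has coprime entries (still with $N_0^2=0$), and then to examine the residual direction field $\operatorname{Im}N_0$; it is precisely here that the hypotheses must be used to show that the level sets of $\lambda$ form a single regular branch through $\mathsf p_0$, i.e.\ that $\lambda$ is equivalent to a pure power of one coordinate rather than, say, to a product of transverse factors.

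Finally I would read off the normal form. In the straightened coordinates $\dd\lambda=\lambda'(x)\,\dd x$, so $N^{*}\dd\lambda=0$ reduces to $\lambda'(x)\,N^{*}\dd x=0$; on the dense set $\{\lambda'(x)\neq 0\}$ this forces $N^{*}\dd x=0$, i.e.\ the first row of $N$ vanishes, and by analyticity it vanishes identically. With $\operatorname{tr}N=0$ this yields
\begin{equation*}
 N=\begin{pmatrix}0&0\\ g(x,y)&0\end{pmatrix},\qquad L=\begin{pmatrix}\lambda(x)&0\\ g(x,y)&\lambda(x)\end{pmatrix}.
\end{equation*}
It remains to normalise $\lambda$. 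Being analytic, non–constant and vanishing at $x=0$, it factors as $\lambda(x)=x^{k}\chi(x)$ with $\chi(0)\neq 0$; choosing the sign so that $\pm\chi(0)>0$ and setting $\tilde x:=\bigl(\pm\lambda(x)\bigr)^{1/k}$ produces an analytic coordinate with $\tilde x'(0)\neq 0$ and $\lambda=\pm\tilde x^{\,k}$. Since this substitution changes $x$ only, it preserves $\pd{}{y}$ and the lower–triangular shape; after renaming $\tilde x,y$ as $x,y$ we obtain the asserted form.
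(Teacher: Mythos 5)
Your opening and closing steps coincide with the paper's: the paper likewise derives the governing identity (its invariant equation \eqref{inveq}, your $N^{*}\dd\lambda=0$, which also follows from \eqref{nijenhuis2dim} exactly as you compute), and its final move is the same reparametrization $\lambda(x)=\pm x^{k}\chi(x)$, $x_{\mathrm{new}}=x\,\chi(x)^{1/k}$. But your proof has a genuine gap precisely where you flag it: you never show that the characteristic direction extends analytically across the zero locus of $N$, nor that $\lambda$ becomes a function of a single coordinate; you only announce that ``the hypotheses must be used'' there. This is not a routine technicality, and it does not follow from $N^{*}\dd\lambda=0$ together with gcd-factoring $N=\rho N_{0}$. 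Indeed, take $\lambda=x^{2}-y^{2}$ and
\begin{equation*}
N=\begin{pmatrix} xy & -y^{2}\\ x^{2} & -xy \end{pmatrix},\qquad L=\lambda\,\Id+N .
\end{equation*}
Here $N^{2}=0$, the entries of $N$ are coprime, $L$ has the double eigenvalue $\lambda$ at every point, $N^{*}\dd\lambda=0$ holds identically, and a direct computation gives $\mathcal N_{L}=0$, so all your intermediate conclusions are satisfied. Yet $\operatorname{Image}N=\operatorname{Ker}N$ is the radial-type direction $(y,x)$, which admits no continuous line-field extension at the origin, and $\lambda=(x+y)(x-y)$ is a product of two transverse prime factors, hence not equal to $\pm u^{k}$ for any analytic coordinate function $u$ (unique factorization: $u$ with $du\neq0$ is prime, while $x+y$, $x-y$ are non-associate primes). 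So the ``main obstacle'' you postpone carries the entire content of the lemma, and your sketched route through $N_{0}$ and branch analysis of level sets of $\lambda$ is left unexecuted; some input beyond $N^{*}\dd\lambda=0$ is indispensable.

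The paper closes this step by a different, more algebraic mechanism that your write-up lacks. Writing $a=\frac12(l^1_1-l^2_2)$, the double-eigenvalue condition gives $l^1_2\,l^2_1=-a^{2}$, so the two meromorphic slopes $f=2l^1_2/(l^1_1-l^2_2)$ and $g=2l^2_1/(l^1_1-l^2_2)$ satisfy $fg=-1$; the paper argues that at least one of them is finite, hence analytic, near $\mathsf p_{0}$, and then straightens the nonvanishing analytic vector field $(1,f)$ by a flow box, after which \eqref{inveq} forces $\lambda=\lambda(x)$ and the triangular shape at once --- no extension of a singular line field and no blow-up is needed. (Note that in the example above both slopes, $-y/x$ and $x/y$, are indeterminate at the origin, which shows exactly how delicate this finiteness step is and that contextual assumptions from the application in Theorem \ref{t4} are in play.) Two smaller points: your disposal of the case $N\equiv 0$ (``follows with $g\equiv 0$'') is too quick, since for scalar $L=\lambda\,\Id$ the Nijenhuis condition is vacuous and $\lambda$ need not depend on one variable, so the final normalization does not apply; and your standing assumption that $\lambda$ is non-constant matches what the paper uses implicitly, so it is legitimate but should be stated as a hypothesis of the lemma.
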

\begin{proof}
    Let $L$ be given by a matrix in some local coordinate system $(x,y)$:
    \begin{equation*}
    L =  \left(\begin{array}{cc}
      l^1_1 & l^1_2  \\
       l^2_1 & l^2_1
  \end{array}\right).  
       \end{equation*}
       We introduce the functions
       \begin{equation*}
           a(x,y) = \frac{1}{2}(l^1_1 - l^2_2), \quad f(x,y) = \frac{2l^1_2}{l^1_1 - l^2_2}, \quad g(x,y) = \frac{2 l^2_1}{l^1_1 - l^2_2}.
       \end{equation*}
       By construction we have $l^1_1 + l^2_2 = 2 \lambda$ and $l^1_1 l^2_2 - l^2_1 l^1_2 = \lambda^2$. This automatically implies that
       \begin{equation}
       \label{3dimlemmaeq}
           -a^2=l^2_1 l^1_2.
       \end{equation}
       Firstly, assume that $a$ is identically zero. This implies that at least one of $l_1^2$ and $l_2^1$ is identically zero and we deal with a triangular matrix with $\lambda$ on the diagonal. Thus the invariant formula \eqref{inveq} yields that the diagonal element, or more precisely the eigenfunction $\lambda$ depends only on one variable.

       Now assume that $a$ is not identically zero. We notice that function $a$ is analytic and $f, g$ are meromorphic. At least one of these functions is $\neq \infty$ at the coordinate origin. Indeed, if both approach infinity, then meromorphic function $f g$ approaches infinity as well. At the same time, $f g=-1$ almost everywhere by \eqref{3dimlemmaeq}. W.l.o.g. assume that this is $f$. By direct computation from \eqref{inveq} we get
$$
\frac{\partial \lambda}{\partial x}+f \frac{\partial \lambda}{\partial y}=0.
$$ 
Taking $\xi=(1, f)$ to be a coordinate vector field $\pd{}{y}$ we again arrive to the case where $\lambda$ depends on one variable and $L$ is as follows:
 \begin{equation*}
    L =  \left(\begin{array}{cc}
      \lambda(x) & 0 \\
       g(x,y) & \lambda(x)
  \end{array}\right)  
       \end{equation*}
We represent $\lambda(x) = \pm x^k g(x)$ where $ g(0)>0,\, k \in \mathbb{Z}_{>0}$. Then, defining a new coordinate system $x_{\text{new}} = x g^{\frac{1}{k}}(x),\, y_{\text{new}} = y$, $L$ takes the following form:
 \begin{equation*}
    L =  \left(\begin{array}{cc}
      \pm x^k & 0 \\
       g(x,y) & \pm x^k
  \end{array}\right).  
       \end{equation*}
\end{proof}
By Lemma \ref{3dimsecondlemma} $L$ is of the following form:
\begin{equation*}
    L = \left(\begin{array}{ccc}
      x^1 & 0 & 0 \\
       h(x^2,x^3) & \pm (x^{2})^{k} + x^1& 0 \\
        f(x^2,x^3) & g(x^2,x^3) & \pm (x^{2})^{k} + x^1
  \end{array}\right),
\end{equation*}
where, $h,f,g$ are analytic functions. The invariant equation \eqref{inveq} for $\lambda = x^1 \pm x_2^k$ immediately implies that $h = \frac{x^2}{k}$. Hence, $L$ and $e$ are as follows:
\begin{equation}
\label{seminormalform}
 L=\left(\begin{array}{lll}
x^{1} & 0 & 0 \\
\frac{x^{2}}{k} &  \pm (x^{2})^{k} + x^1 & 0 \\
f(x^2,x^3) & g(x^2,x^3) & \pm (x^{2})^{k} + x^1 \\
\end{array}\right) ,\, e = \pd{}{x^1},
\end{equation}
and vanishing of Nijenhuis torsion ($\mathcal{N}_{L} \equiv 0$) implies that the following PDE has to be fulfilled:
\begin{equation}
\label{3dimnijenhuiscond}
 \frac{x^{2}}{k}\pd{g}{x^2} + f\pd{g}{x^3} - g\pd{f}{x^3} = \frac{k-1}{k} g. 
\end{equation}
\begin{Remark}
The coordinates in \eqref{seminormalform} are defined up to the coordinate transformation $y^1=x^1, y^2=x^2, y^3=h\left(x^2, x^3\right)$ where $\frac{\partial h}{\partial x^3}(0,0) \neq 0, h(0,0) =0$. Under this transformation, the functional parameters of \eqref{seminormalform} are transformed in two steps.
\begin{enumerate}
    \item  Functions are transformed according to formulas:

\begin{align}
\label{3dimcoordtransform1}
& g_{\textnormal{new}}\left(x^2, x^3\right)=g\left(x^2, x^3\right) \frac{\partial h}{\partial x^3}\left(x^2, x^3\right), \\
\label{3dimcoordtransform2}
& f_{\textnormal{new}}\left(x^2, x^3\right)=\frac{x^2}{k} \frac{\partial h}{\partial x^2}\left(x^2, x^3\right)+f\left(x^2, x^3\right) \frac{\partial h}{\partial x^3}\left(x^2, x^3\right) .
\end{align}

\item Substitute the inverse coordinate change, that is $x^1\left(y^1\right), x^2\left(y^2\right), y^3\left(x^2, x^3\right)$ into the functions to obtain $f_{\textnormal{new }}\left(y^2, y^3\right)$ and $g_{\textnormal{new}}=g\left(y^2, y^3\right)$.
\end{enumerate}
\end{Remark}
 We distinguish the following possible cases:
\subsection{\texorpdfstring{$f(0,0) \neq 0$}{f(0,0) not = 0}}
We propose that there exists a coordinate transformation $h(x^2,x^3)$ such that $f_{\text{new}} \equiv 1$. Indeed, by \eqref{3dimcoordtransform2} we have:
\begin{equation*}
    \pd{h}{x^3} = \frac{1 - \frac{x^2}{k}\pd{h}{x^2}}{f}.
\end{equation*}
Due to the Cauchy-Kovalevskaya theorem a solution of this equation exists and is uniquely defined by an initial condition $h(x^2,0)$. Note that $\pd{h}{x^3}(0,0) = \frac{1}{f(0,0)} \neq 0$, hence $h$ defines an appropriate coordinate transformation $y^1=x^1, y^2=x^2, y^3=h\left(x^2, x^3\right)$. 

In the coordinates $(y^1,y^2,y^3)$ the differential condition \eqref{3dimnijenhuiscond} is
\begin{equation*}
 \frac{y^{2}}{k}\pd{g}{y^2} + \pd{g}{y^3} =  \frac{k-1}{k} g.    
\end{equation*}
Hence, $g(y^2,y^3) = F(y^{2}e^{-\frac{y^{3}}{k}})e^{\frac{(k-1)y^{3}}{k}}$, where $F$ is an analytic function. 

It is easy to check that transformations preserving $f_{\text{new}} \equiv 1$ are of the form:
\begin{equation*}
    z^1 = y^1, z^2 = y^2, z^3 = y^3 + z^2 q(z^2), \quad \text{where $q$ is an analytic function,}
\end{equation*}
and the functional parameter $g$ respects its transformation law \eqref{3dimcoordtransform1}, i.e.,  
\begin{equation*}
    g_{\text{new}}(z^2,z^3) = g(z^2, z^3 - z^2 q(z^2)).
\end{equation*}

\subsection{\texorpdfstring{$g(0,0) \neq 0, f(0,0) = 0$}{g(0,0) not = 0, f(0,0) = 0}}
Similarly, we are looking for a transformation $h(x^2,x^3)$ such that $g_{\text{new}} \equiv 1$. From \eqref{3dimcoordtransform1} we obtain:
\begin{equation*}
  \pd{h}{x^3} = \frac{1}{g}.  
\end{equation*}
The Cauchy-Kovalevskaya theorem guarantees that a solution to this equation exists and is uniquely defined by an initial condition $h(x^2,0)$. Note that $\pd{h}{x^3}(0,0) = \frac{1}{g(0,0)} \neq 0$, hence $h$ defines an appropriate coordinate transformation $y^1=x^1, y^2=x^2, y^3=h\left(x^2, x^3\right)$. 
In the coordinates $(y^1,y^2,y^3)$ the differential condition \eqref{3dimnijenhuiscond} is
\begin{equation*}
 \pd{f}{y^3}  = - \frac{k-1}{k}.     
\end{equation*}
Therefore, $f(y^2,y^3) =- \frac{k-1}{k}y^3 + y^2 s(y^2)$, where $s$ is an arbitrary function. The shape of the second summand follows from the assumption $f(0,0) = 0$.

As in the previous case, transformations preserving $g_{\text{new}} \equiv 1$ are as follows:
\begin{equation*}
z^1 = y^1, z^2 = y^2, z^3 = y^3 + y^2 q(y^2), \quad \text{where $q$ is an analytic function}
\end{equation*}
By the transformation law \eqref{3dimcoordtransform2} of $f$ we get:
\begin{equation*}
 f_{\text{new}} = -\frac{k-1}{k}z^3 + (\frac{z^2}{k}(q +z^2 q') + z^2 s + \frac{k-1}{k} z^2 q).  
\end{equation*}
 Consider the following ODE:
 \begin{equation}
 \label{3dimode}
     \frac{z^2}{k}q' + q = -s.
 \end{equation}
Let $s$ be given by a power series, i.e., $s =\sum_{i=0}^{\infty} a_{i}(z^{2})^i $. Then a power series of $q$ is uniquely defined by $s$ and is convergent. More precisely, if $q = \sum_{i=0}^{\infty} b_{i}(z^{2})^i$,  we have:
\begin{equation*}
    b_i = -\frac{k}{k+i} a_i, \quad \text{for $i \in \mathbb{Z}_{\geq 0}$}.
\end{equation*}
Hence, there exists a unique local coordinate system $(z^1, z^2, z^3)$ that brings $L$ to the desired form.

\section{Proof of Theorem \ref{t5}}
We will first demonstrate that the class of regular $F$-manifolds belongs to the class of Nijenhuis manifolds with a cyclic unity.

Let $\left(\mathsf M^{n}, \circ, e, E\right)$ be a regular $F$-manifold. Then the pair $\left(L, e\right)$, where $L:=E\circ$, is a Nijenhuis operator with a unity. Indeed, the conditions \eqref{hertlingmanincond} and \eqref{eulervectorfieldcond}  imply
\begin{equation*}
   \mathcal{L}_{e}(L) = \mathcal{L}_{e}(E\circ)= \left[e, E\right]\circ + E\mathcal{L}_{e}(\circ)= e\, \circ = \operatorname{Id}. 
\end{equation*}
Thus, the triple $(\mathsf M^{n}, L, e)$ form a Nijenhuis manifold with a unity. It remains to demonstrate that the unity $e$ is cyclic for $L$.

 Due to $\gl$-regularity of $L$, there exists a cyclic vector field $\xi$. Suppose that there exists an $n$-tuple $\left(\lambda_0, \dots, \lambda_{n-1}\right)$, such that
   \begin{equation*}
       \lambda_{0}e + \hdots + \lambda_{n-1}L^{n-1}e = 0.
   \end{equation*}
   Multiplying both sides of this equality by $\xi$ we obtain:
   \begin{equation*}
    \lambda_{0}\xi + \hdots + \lambda_{n-1}L^{n-1}\xi = 0.   
   \end{equation*}
   As $\xi$ is cyclic, we get
   \begin{equation*}
       \left(\lambda_0, \dots, \lambda_{n-1}\right) = \left(0,\dots,0\right).
   \end{equation*}
   Therefore, $e$ is a cyclic vector field.

Now, let us show that a Nijenhuis manifold with a cyclic unity $\left(\mathsf M^n, L, e\right)$ possesses a natural structure of an $F$-manifold.

We define a $(1,2)$ - tensor field $\circ$ on $\mathsf M^n$ as follows:
\begin{equation*}
   X_i \circ X_j := X_{i+j}, \quad \text{$\circ$ is $C^{\infty}$-linear in both arguments}, \quad \text{where $X_{i}:=L^{i}e$}.
\end{equation*}
Clearly, so defined multiplication $\circ$ is commutative, associative, and $X_0 = e$ is the unity.
\begin{Lemma}
\label{nijenhuiswithunityprop}
Let $L$ be a Nijenhuis operator on a manifold $\mathsf M^n$ and $e$ is a unity to $L$. Then the following holds:
\begin{align}
 \label{comrel1}
&\mathcal{L}_{X_{i}}(L^{j})= jL^{i+j-1}, \quad \text{where} \quad X_{i} = L^{i}(e), \quad  i,j \in \mathbb{Z}_{\geq 0}, \quad L^{0}:= \operatorname{Id}. \\
\label{comrel2}
&\left[X_{i}, X_{j}\right] = (j-i)X_{i+j-1} \quad \text{for} \quad i,j \in \mathbb{Z}_{\geq 0}.
\end{align}
\end{Lemma}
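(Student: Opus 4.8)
The plan is to reduce both identities to the single relation $\mathcal{L}_{X_i}L = L^i$, after which \eqref{comrel1} and \eqref{comrel2} follow by routine Leibniz manipulations. The starting point is the interpretation of the vanishing Nijenhuis torsion recorded in \eqref{nijendef2}: since $\mathcal{N}_L(\xi) = L\mathcal{L}_\xi L - \mathcal{L}_{L\xi}L = 0$ for every vector field $\xi$, we may use the key identity
\[
\mathcal{L}_{L\xi}L = L\,\mathcal{L}_\xi L \quad \text{for all } \xi.
\]
First I would prove $\mathcal{L}_{X_i}L = L^i$ by induction on $i$. The base case $i=0$ is exactly the unity condition $\mathcal{L}_e L = \operatorname{Id}$ from \eqref{unitycond}. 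For the inductive step I apply the displayed identity with $\xi = X_{i-1}$, using $L X_{i-1} = L\cdot L^{i-1}e = L^i e = X_i$, which gives $\mathcal{L}_{X_i}L = L\,\mathcal{L}_{X_{i-1}}L = L\cdot L^{i-1} = L^i$.

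Next I would derive \eqref{comrel1}. Because every operator in sight is a power of $L$, and hence the relevant factors mutually commute, the Leibniz rule $\mathcal{L}_X(AB) = (\mathcal{L}_X A)B + A(\mathcal{L}_X B)$ for operator fields yields
\[
\mathcal{L}_{X_i}(L^j) = \sum_{a+b=j-1} L^a (\mathcal{L}_{X_i}L) L^b.
\]
Substituting $\mathcal{L}_{X_i}L = L^i$ collapses each of the $j$ summands to $L^{i+j-1}$, giving $\mathcal{L}_{X_i}(L^j) = j L^{i+j-1}$, which is \eqref{comrel1}. I record in particular the special case $i=0$, namely $\mathcal{L}_e(L^i) = i L^{i-1}$, which is needed below.

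Finally, for the bracket relation \eqref{comrel2} I write $X_j = L^j e$ and expand via $[X_i, L^j e] = (\mathcal{L}_{X_i}L^j)e + L^j[X_i, e]$. The first term equals $j L^{i+j-1}e = j X_{i+j-1}$ by \eqref{comrel1}. For the second term I compute $[X_i, e] = -[e, L^i e] = -(\mathcal{L}_e L^i)e = -i L^{i-1}e = -i X_{i-1}$ using the $i=0$ case above, so that $L^j[X_i, e] = -i X_{i+j-1}$. Adding the two contributions gives $[X_i, X_j] = (j-i)X_{i+j-1}$.

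I do not anticipate a serious obstacle: the whole argument rests on the reformulation of $\mathcal{N}_L = 0$ as $\mathcal{L}_{L\xi}L = L\,\mathcal{L}_\xi L$ together with the unity condition. The only point requiring care is the Leibniz expansion in the second paragraph, where one must apply the product rule for the Lie derivative of composed $(1,1)$-tensors correctly and observe that the resulting factors genuinely commute, being powers of the single operator $L$; everything else is bookkeeping of the number of terms in each sum.
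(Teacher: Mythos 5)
Your proof is correct and follows essentially the same route as the paper: both derive the key relation $\mathcal{L}_{X_i}L = L^i$ from the torsion identity $\mathcal{L}_{L\xi}L = L\,\mathcal{L}_\xi L$ together with the unity condition, and then obtain \eqref{comrel1} by the Leibniz rule. The only cosmetic difference is in \eqref{comrel2}, where you expand $[X_i, L^j e]$ and compute $[X_i,e]=-iX_{i-1}$ separately, while the paper writes $X_j = L^{j-i}X_i$ (assuming without loss of generality $j>i$) and applies Leibniz once --- your variant even avoids that case distinction.
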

\begin{proof}
    Due to  \eqref{nijendef2} we have
    \begin{equation*}
        \mathcal{L}_{X_{i+1}}(L) = L\mathcal{L}_{X_{i}}(L),
    \end{equation*}
    and together with $\mathcal{L}_{e}(L) = \operatorname{Id}$ we get
      \begin{equation*}
        \mathcal{L}_{X_{i}}(L) = L^{i}.
    \end{equation*}
    Applying the Leibniz rule for a Lie derivative we obtain
    \begin{equation*}
       \mathcal{L}_{X_{i}}(L^{j}) =  jL^{j-1}\mathcal{L}_{X_{i}}(L) = jL^{i+j-1}.
    \end{equation*}
Without loss of generality, we can assume that $j > i$. Then the second formula can be derived as follows:
    \begin{equation*}
     \left[X_{i}, X_{j}\right] = \left[X_{i}, L^{j-i}X_{i}\right] = \mathcal{L}_{X_{i}}(L^{j-i})X_{i} + L^{j-i}\left[X_{i}, X_{i}\right]  = (j-i)L^{j-1}X_{i} = (j-i)X_{i+j-1}. 
    \end{equation*}
    \end{proof}
 We need to show that $(M, \circ, e, E)$ is an $F$-manifold, i.e., conditions (\ref{hertlingmanincond}) and (\ref{eulervectorfieldcond}) are satisfied. Note that it is sufficient to prove the conditions for the frame $\left\{X_0, X_1, \cdots, X_{n-1}\right\}$. Using the definition of $\circ$ and (\ref{comrel2}) we obtain
\begin{align}
    &\begin{aligned}
&[X_i \circ X_j, X_k \circ X_l] - [X_i \circ X_j, X_k] \circ X_l - X_k \circ [X_i \circ X_j, X_l] - X_i \circ [X_j, X_k \circ X_l] + \\
&X_i \circ [X_j, X_k] \circ X_l + X_i \circ X_k \circ [X_j, X_l] - X_j \circ [X_i, X_k \circ X_l] + X_j \circ [X_i, X_k] \circ X_l +  \\
 &X_j \circ X_k \circ [X_i, X_l] = [X_{i+j}, X_{k+l}] - [ X_{i+j}, X_k] \circ X_l - X_k \circ [X_{i+j}, X_l] - X_i \circ [X_j, X_{k+l}] +  \\
 &X_{i+l} \circ [X_j, X_k] + X_{i+k} \circ [X_j, X_l] - X_j \circ [X_i, X_{k+l}] + X_{j+l} \circ [X_i, X_k]  + X_{j+k} \circ [X_i, X_l] = \\
    & (k+l-i-j)X_{i+j+k+l-1} - (k-i-j)X_{i+j+k+l-1} - (l-i-j) X_{i+j+k+l-1} - \\
    & - (k+l-j) X_{i+j+k+l-1} + (k-j) X_{i+j+k+l-1} + (l-j) X_{i+j+k+l-1} - \\
    & - (k+l-i) X_{i+j+k+l-1} + (k-i) X_{i+j+k+l-1} + (l-i) X_{i+j+k+l-1} = 0,
    \end{aligned}
    \\
    &\begin{aligned}
 &[X_1, X_i \circ X_j] - [X_1, X_i]\circ X_j - X_i \circ [X_1, X_j] = \\
  &(i+j-1) X_{i+j} - (i-1) X_{i+j} - (j-1) X_{i+j} = X_i \circ X_j.
 \end{aligned}
\end{align}

\section{Proof of Theorem \ref{t6}}

We begin with the semi-normal form of $L$ described in Theorem \ref{t4}. Since the unity $e$ is a cyclic vector field for $L$ at a generic point, we may define a multiplication $\circ$ as in Theorem \ref{t5} and $\left(\mathsf M^3, \circ, e, E\right)$ will be an $F$-manifold.

Our goal is to obtain structural constants $c^{i}_{j k}$ of $\circ = c^{i}_{j k} \pd{}{x^i} \otimes dx^j \otimes dx^k$ in the local coordinates $\left(x^1,x^2,x^3\right)$. 
Since $e$ is the unity with respect to multiplication $\circ$, we have:
\begin{equation*}
    \pd{}{x^1} \circ \pd{}{x^i} = \pd{}{x^i}, \quad i = 1, 2, 3,
\end{equation*}
or, equivalently,
\begin{equation*}
    c^{i}_{j, 1} =
\begin{cases}
1, \quad \text{if $i = j$},\\
0, \quad \text{otherwise}.
\end{cases}
\end{equation*}
By a straightforward computation, we have:
\begin{equation*}
\begin{aligned}
     X_1 &= (x^1 + \lambda_0) \pd{}{x^1} + \frac{x^2}{k}\pd{}{x^2}+f\pd{}{x^3},  \\
    X_2 &= (x^1 + \lambda_0)^2 \pd{}{x^1} + \frac{x^2(\pm(x^2)^k+2x^1+2\lambda_0)}{k}\pd{}{x^2} + \frac{2(\lambda_0+x^1\pm\frac{(x^2)^k}{2})kf + gx^2}{k}\pd{}{x^3}, \\
     X_3 &= (x^1+\lambda_0)^3\pd{}{x^1} + \frac{x^2((x^2)^k + 3(x^1 + \lambda_0)(\pm(x^2)^k+x^1+\lambda_0))}{k}\pd{}{x^2} +  \\
&\frac{\pm2g(x^{2})^{k+1}+kf(x^{2})^{2k}+3((\pm(x^2)^k+x^1+\lambda_0)kf+gx^2)(x^1+\lambda_0)}{k} \pd{}{x^3}, \\
X_4 &= (x^1+\lambda_0)^4\pd{}{x^1} + \frac{x^2}{k}(4(x^2)^{2k}x^1+4(x^2)^{2k}\lambda_0\pm(x^2)^{3k}\pm \\
&6(x^2)^k\lambda^2_0\pm12(x^2)^kx^1\lambda_0\pm6(x^2)^k(x^1)^2+4\lambda^3_0+12x^1\lambda^2_0+12(x^1)^2\lambda_0+4(x^1)^3)\pd{}{x^2} + \\
&\frac{1}{k}(3g(x^2)^{2k+1} \pm8g(x^1+\lambda_0)(x^2)^{k+1}+4kf(x^1+\lambda_0)(x^2)^{2k} \pm kf(x^2)^{3k}+ \\
&4((\lambda_0 + x^1 \pm \frac{3(x^2)^k}{2})kf+\frac{3gx^2}{2})(x^1+\lambda_0)^2)\pd{}{x^3}.
\end{aligned}
\end{equation*}
And the following equalities hold:
\begin{equation*}
    \begin{aligned}
        X_1 \circ X_1 &= X_2, \\
        X_1 \circ X_2 &= X_3, \\
        X_2 \circ X_2 &= X_4.
    \end{aligned}
\end{equation*}
This leads to three $3 \times 3$ linear systems, which are non-degenerate at points, where $g \neq 0$. The solutions of these systems are the following:
\begin{equation*}
    \begin{aligned}
        c^{1}_{22} &= 0, \quad &c^{1}_{23} &= 0, \quad &c^{1}_{33} &= 0, \\
        c^{2}_{22} &=\pm k(x^{2})^{k-1}, \quad &c^{2}_{23} &= 0, \quad &c^{2}_{33} &= 0, \\
        c^{3}_{22} &= \frac{k}{x^2}(kf(x^2)^{k-1} \mp g), \quad &c^{3}_{23} &=\pm k(x^{2})^{k-1}, \quad &c^{2}_{33} &= 0.
    \end{aligned} 
\end{equation*}
Let  $h(x^2,x^3):=\frac{k}{x^2}(kf(x^2)^{k-1} \mp g)$. Then we obtain
\begin{equation*}
  \begin{aligned}
    & \pd{}{x^1} \circ \pd{}{x^i} = \pd{}{x^i}, \quad i = 1, 2, 3, \\
    & \pd{}{x^2} \circ \pd{}{x^2} = \pm k(x^2)^{k - 1} \pd{}{x^2} + h(x^2, x^3) \pd{}{x^3}, \\
    & \pd{}{x^2} \circ \pd{}{x^3} = \pm k(x^2)^{k - 1} \pd{}{x^3}, \\
    & \pd{}{x^3} \circ \pd{}{x^3} = 0.
\end{aligned}  
\end{equation*}
Substituting $g = \mp (\frac{hx^2}{k} - kf(x^2)^{k-1})$ into \eqref{3dimnijenhuiscond} we get:

   \begin{equation*}
    \frac{x^2}{k}\pd{h}{x^2} + f\pd{h}{x^3} -k(x^2)^{k-1}\pd{f}{x^2} - h\pd{f}{x^3} = \frac{k-2}{k}h.
\end{equation*}
The transformation rules \eqref{3dimcoordtransform1},\eqref{3dimcoordtransform2} for a pair of  functional parameters $g,f$ lead to the following transformation rules for a pair $h,f$:
\begin{align*}
& \Bar{h}(x^2, r(x^2,x^3))=k(x^2)^{k-1}\pd{r}{x^2}\left(x^2,x^3\right)   +  h\left(x^2, x^3\right) \frac{\partial r}{\partial x^3}\left(x^2, x^3\right), \\
& \Bar{f}(x^2, r(x^2,x^3))=\frac{x^2}{k} \frac{\partial r}{\partial x^2}\left(x^2, x^3\right)+f\left(x^2, x^3\right) \frac{\partial r}{\partial x^3}\left(x^2, x^3\right) . 
\end{align*}
\newline
\textbf{Acknowledgements:} The research of E. Antonov was supported by the DFG grant (MA 2565/7, project number 455806247). The authors would like to thank Vladimir Matveev and Alexey Bolsinov for fruitful and substantial discussions.

\end{document}